\DeclareMathAlphabet{\mathpzc}{OT1}{pzc}{m}{it}
\newtheorem{theorem}{Theorem}[section]
\newtheorem{lemma}[theorem]{Lemma}
\newtheorem{proposition}[theorem]{Proposition}
\newtheorem{corollary}[theorem]{Corollary}
\theoremstyle{definition}
\newtheorem{example}[theorem]{Example}
\theoremstyle{remark}
\newtheorem{remark}[theorem]{Remark}
\numberwithin{equation}{section}
\def\q{\mathpzc q}
\def\eps{\varepsilon}
\def\disp{\displaystyle}
\def\p{\mathcal{P}}
\def\cd{\mathrm{cd}}
\def\HB{\textit{HB}}
\begin{document}
\setcounter{page}{1}

\title{Ideal structures in vector-valued polynomial spaces}

\author[V. Dimant, S. Lassalle, A. Prieto]{Ver\'{o}nica Dimant$^1$, Silvia
Lassalle$^1$, \'{A}ngeles Prieto$^2$}

\address{$^{1}$Departamento de Matem\'{a}tica, Universidad de San
Andr\'{e}s, Vito Dumas 284, (B1644BID) Victoria, Buenos Aires,
Argentina and CONICET.} \email{\textcolor[rgb]{0.00,0.00,0.84}{vero@udesa.edu.ar; slassalle@udesa.edu.ar}}

\address{$^{2}$Departamento de An\'{a}lisis Matem\'{a}tico, Facultad de CC.
Matem\'{a}ticas, Universidad Complutense de Madrid,
Plaza de Ciencias, 3, 28040 Madrid, Spain.} \email{\textcolor[rgb]{0.00,0.00,0.84}{angelin@mat.ucm.es}}

\subjclass[2010]{Primary 46G25; Secondary 47H60, 46B04, 47L22}
\keywords{\HB-subspaces, homogeneous polynomials, weakly continuous on bounded sets polynomials.}

\begin{abstract}
This paper is concerned with the study of geometric structures in spaces of
polynomials. More precisely, we discuss for $E$ and $F$ Banach spaces, whether
the class of  $n$-homogeneous polynomials,
$\mathcal P_w(^n E, F)$, which are weakly continuous on bounded sets, is an \HB-subspace or an $M(1,C)$-ideal in the space of
continuous $n$-homogeneous polynomials, $\mathcal P(^n E, F)$. We establish
sufficient conditions under which the problem can be positively solved. Some
examples are given. We also study when some ideal structures pass from $\mathcal
P_w(^n E, F)$ as an ideal in $\mathcal P(^n E, F)$  to the range space $F$ as an
ideal in its bidual $F^{**}$.
\end{abstract}

\maketitle

\section{Introduction}
Let $X$ be a (real or complex) Banach space and let $J$ be a closed subspace of
$X$. According to the Hahn--Banach theorem, every continuous linear functional
$g \in J^*$ has an extension $f\in X^*$, with the same norm. A long standing
problem is to determine when every functional on $J$ has a \textbf{unique}
norm-preserving extension to $X$. This question is closely related to
geometric properties of both spaces which, in many cases, imply the existence of
a norm-one projection on $X^*$ whose kernel is $J^{\perp}:=\{x^*\in X^*\colon
x^*(y)=0,\, \text{for all } y \in J \}$, the annihilator of $J$. When there
exists such a projection $J$ is said to be an \textit{ideal} in $X$. A canonical
example of this fact is that $X$ is always an ideal in its bidual $X^{**}$.

The notion of $M$-\textit{ideal}, introduced by Alfsen and Effros
and widely studied in the book by Harmand, Werner and Werner \cite{HWW} is one
of these geometric properties ensuring unique Hahn--Banach extensions. Recall
that $J$ \textit{is an $M$-ideal in} $X$ if it is an ideal in $X$ with
associated projection $\q$ such that for each $f\in X^*$ one has
$$
\|f\|=\|\q f\| + \|f-\q f\|.
$$
The fact that $J$ is an $M$-ideal in $X$ has a strong impact on both $J$ and
$X$, and sometimes seems to be too restrictive. So, we will be interested in
studying some weaker properties among those  implying unique norm-preserving
extensions.

Recall that a closed subspace $J$ is \HB-\textit{smooth} in $X$ if  every
element in $J^*$ has a unique norm-preserving extension to an element in $X^*$.
A closed subspace $J$ is \textit{strongly} \HB-\textit{smooth} in $X$ if there
exists a linear projection $\q$ on $X^*$ whose kernel is $J^\perp$ such that for
each $f\in X^*$ with $f\ne \q f$ one has
$$
\|\q f\|<\|f\|.
$$
The interplay between uniqueness of the extension and strong \HB-smoothness
was clarified by Oja \cite{Oja-88}. Namely, the uniqueness of the extensions
and being an ideal are independent notions for a subspace $J$, strong
\HB-smoothness implies both, and if $J$ is an \HB-smooth ideal in $X$ then $J$
is strongly \HB-smooth in $X$.

A particular case of \HB-smoothness is the notion of \HB-\textit{subspace},
introduced by Hennefeld \cite{Hen}. A closed subspace $J$ \textit{is an
\HB-subspace of} $X$ if there exists a projection $\q$ on $X^*$ whose kernel is
$J^\perp$ such that for each $f\in X^*$ with $f\ne \q f$ one has
$$
\|\q f\|<\|f\| \qquad \text{and} \qquad \|f-\q f\|\leq \|f\|.
$$

Finally, given $C \in (0,1]$, a closed  subspace $J$ is an
$M(1,C)$-\textit{ideal} in $X$ if $J$ is an ideal of $X$ with associated
projection $\q$ on $X^*$ such that for each $f\in X^*$ one has
$$
\|\q f\|+C\|f-\q f\|\leq \|f\|.
$$
The last inequality is called the $M(1,C)$-inequality. Note that when $C=1$,
the notion of $M$-ideal is covered and to be $M(1,C)$-ideal immediately implies
strong \HB-smoothness. However, the notions of $M(1,C)$-ideal and \HB-subspace
are independent. On the one hand, Cabello and Nieto \cite[Example~3.7]{CabNie2}
showed that if $X$ is a nonreflexive separable $M$-ideal in its bidual, then
$\ell_p(X)$ as a subspace of its bidual, $1<p<\infty$, is an \HB-subspace that
cannot be renormed to be an $M(1,C)$-ideal for any $0<C<1$. On the other hand,
Cabello, Nieto and Oja \cite[Example~4.3]{CabNieOja} showed that for any
$0<C<1$, there is a renorming of $c_0$, $\hat c_0$ due to Johnson and Wolfe,
such that the space of compact operators on $\hat c_0$, $\mathcal K(\hat c_0)$
is an $M(1,C)$-ideal in the space of all continuous operators $\mathcal L(\hat
c_0)$ without being an \HB-subspace.

Several authors have been interested in this kind of properties for arbitrary
subspaces of Banach spaces, and also for distinguished particular cases. The
space of compact operators $\mathcal K(E,F)$ between Banach spaces $E$ and $F$
as a subspace of the space of all continuous linear operators $\mathcal L(E,F)$
received special interest (see, for example,
\cite{CabNie2,CabNieOja,Hen,LL,Oja-88,Oja-89,Oja-Pol,W}). The strongest of the
abovementioned properties is the one of being an $M$-ideal. All other properties which are
more flexible still allow us to deal with uniqueness of Hahn--Banach extensions.

Here, we will be concerned with $\mathcal P(^n E, F)$, the space of continuous
$n$-homo\-gen\-eous polynomials between Banach spaces $E$ and $F$. In the polynomial
context, the space of \textit{compact mappings} is usually replaced by $\mathcal
P_w(^nE, F)$ the subspace of \textit{homogeneous polynomials which are weakly
continuous on bounded sets}. Recall that a polynomial $P\in \mathcal  P(^n E,
F)$  is in $\mathcal P_w(^nE, F)$ if it  maps bounded weakly convergent nets
into convergent nets. Note that we could have considered polynomials in $\p(^n
E, F)$ mapping bounded sets into relatively compact sets, which are called
compact polynomials. For linear operators to be compact and to be weakly
continuous on bounded sets are equivalent notions. For $n$-homogeneous
polynomials with $n>1$, every polynomial in  $\mathcal P_w(^nE, F)$ is compact
(as can be derived from results in \cite{AHV} and \cite{AP}), but the converse
might not be true. Every scalar-valued continuous polynomial is compact but it
is not necessarily weakly continuous on bounded sets. The prototypical example
of this situation is given by $P(x)=\sum_k x_k^2$, for all $x=(x_k)_k\in
\ell_2$. Therefore, we will focus our attention on determining the presence of
ideal structures for $\mathcal P_w(^nE, F)$ as a subspace of $\mathcal P(^n E,
F)$. To be more precise, our main concern is to study the notion of \HB-subspace
in the polynomial setting.

Some previous results in this direction can be found in \cite{Dim},
where the problem of determining when  $\mathcal P_w(^nE)$ is an $M$-ideal in
$\mathcal P(^nE)$ was considered. A vector-valued approach of the same question
was treated in  \cite{DiLa}. Note that the searching of ideal structures for
$\mathcal P_w(^nE, F)$ as a subspace of $\mathcal P(^n E, F)$ makes sense when
the spaces $\mathcal P_w(^nE, F)$ and $\mathcal P(^n E, F)$ do not coincide. The
equality $\mathcal P_w(^nE, F)=\mathcal P(^n E, F)$ is a long standing
nontrivial problem, considered for instance in \cite{AF,BR-98,GG,
GJ}.

The plan of the paper is as follows. First, we review the notation and the
basic facts that will be used in Sections 3 and 4. Then, in Section 3, we investigate
sufficient conditions under which the subspace $\mathcal P_w(^nE,F)$ enjoys an
additional geometric structure inside $\mathcal P(^nE,F)$ and we exhibit some
particular examples. In the last section, we study some ideal structures for the
range space $F$ as a subspace of $F^{**}$, when they are fulfilled by $\mathcal
P_w(^nE,F)$ as a subspace of $\mathcal P(^nE,F)$.
\bigskip

\section{Notation and basic facts}

Before proceeding, we fix some notation. Every time we write $E$  or $F$ we
will be considering Banach spaces over the real or complex field, $\mathbb{K}$.
The closed unit ball of $E$ will be denoted by $B_E$ and the unit sphere by
$S_E$. As usual, $E^*$ and $E^{**}$ stand for the dual and bidual of $E$,
respectively. The space of linear bounded operators from $E$ to $F$ will be
denoted by $\mathcal{L}(E,F)$ (and $\mathcal{L}(E)$ when $E=F$); its subspace of
compact mappings will be denoted by $\mathcal{K}(E,F)$ ($\mathcal{K}(E)$ in the
case $E=F$).

A function $P\colon E\to F$ is an $n$-homogeneous polynomial if there exists a
(unique) symmetric $n$-linear form $A\colon\underbrace{E\times\cdots\times E}_n\to F$ such that
$$
P(x)=A(x,\dots,x),
$$
for all $x\in E$. The space of all continuous $n$-homogeneous polynomials from $E$ to $F$,
$\mathcal P(^nE,F)$, endowed with the supremum norm
$$
\|P\|=\sup\{\|P(x)\| \colon \, x\in B_E\},
$$
is a Banach space.

Every  polynomial $P$ in $\mathcal P(^n E;F)$ can be associated with a linear operator
in $\mathcal{L}(\widehat\otimes^{n,s}_{\pi_s}E ; F)$, where $\pi_s$ is the
symmetric  projective tensor norm. We will identify $P$ with its linearization
without further mention. Even though this identification preserves the norm,
there is no Hahn--Banach Theorem for homogeneous polynomials of degree $2$ or
greater. However, Aron and Berner \cite{AB} and Davie and Gamelin
\cite{DaGa} showed that for every $P\in {\mathcal P}(^nE,F)$ there is a
norm-preserving extension of $P$ to $\overline P\in \mathcal
P(^nE^{**},F^{**})$ such that $\overline{P}(x)=P(x)$ for all $x\in E$. The
construction of this {\it canonical extension} is based on the Arens extension
of the symmetric mapping $A$ associated to the polynomial $P$. To obtain the
Arens extension, we simply extend by weak-star continuity, one variable at a
time, the $n$ variables of $A$. This process depends on the order that the variables
are extended and the final result might not be a symmetric mapping. However, the
$n!$ possible extensions coincide on the diagonal and $\overline P$ is well
defined. For the particular case in which $P$ belongs to $\mathcal P_w(^nE,F)$,
the range of  $\overline{P}$ is also in $F$ (as can be derived from \cite{AHV}
and \cite[Proposition~2.5]{CaLa}). This fact will be used repeatedly in
Section~4.

In this paper, we will present several results in which at least one of the
spaces involved enjoys the metric compact approximation property. Recall that a
Banach space $E$ has the \textit{metric compact approximation property} if there
is a net of compact operators $(K_\alpha)$ on $E$ such that $K_\alpha \to {\rm
Id}_E$ pointwise and $\sup_\alpha \|K_\alpha\|\le 1$. Usually, the net
$(K_\alpha)$ is called a \textit{metric compact approximation of the identity}.
If in addition $K^*_\alpha \to {\rm Id}_{E^*}$ pointwise, the net $(K_\alpha)$
is called a \textit{shrinking} metric compact approximation of the identity. As usual,
$K^{\alpha}$ denotes the operator $\text{Id}_E-K_{\alpha}$. For dual spaces we
have the following intermediate property. We say that $E^*$ has a \textit{metric
compact approximation of the identity with adjoint operators} if there exists a
net $(K_\alpha)\subset \mathcal K(E)$ such that $K_\alpha^*$ converges to ${\rm
Id}_{E^*}$ pointwise and $\sup_\alpha \|K_\alpha\|\le 1$. These notions are
closely related with ideal structures on Banach spaces. For instance,
\cite[Theorem 1.1]{LL} asserts that the following conditions are equivalent:

\begin{enumerate}[\upshape (i)]
\item  $F$ has the metric compact approximation property
\item  $\mathcal K(E,F)$ is an ideal in $\mathcal L(E,F)$ for every Banach space $E$.
\end{enumerate}

So, it is natural to expect that the metric compact approximation property
shows up when describing $\mathcal P_w(^nE, F)$ as an ideal in $\mathcal P(^nE,
F)$.

One further ingredient will appear in our discussion. In \cite{Delp}, Delpech
obtained an appropriate connection between the moduli of asymptotic uniform
smoothness and convexity and weak sequential continuity of polynomials. In
\cite{DiGoJa}, Dimant, Gonzalo and Jaramillo followed his approach to obtain
results on compactness or weak-sequential continuity of multilinear mappings.
Here, we will impose restrictions on the growth of the moduli of the underlying
spaces $E$ or $F$ to ensure that $\mathcal P_w(^nE, F)$ enjoys an appropriate
property in $\mathcal P(^nE, F)$ (see Theorem~\ref{asymptotic} and
Proposition~\ref{useful}). Some definitions are in order.

For an infinite dimensional Banach space $E$, the \textit{modulus of asymptotic
pointwise smoothness} is defined for $\|x\|=1$ and $t>0$ by
$$
\overline{\rho}_E(t;x)=\inf_{\dim(E/H)<\infty} \sup_{h\in H, \|h\|\leq t}
\|x+h\|-1,
$$
and the \textit{modulus of asymptotic uniform smoothness} is defined for $t>0$ by
$$
\overline{\rho}_E(t)= \sup_{\|x\|=1}\overline{\rho}_E(t;x).
$$
The space $E$ is \textit{asymptotically uniformly smooth} if
$\lim_{t \to 0} \displaystyle\frac{\overline{\rho}_E(t)}{t}=0$.

For an infinite dimensional Banach space, the \textit{modulus of asymptotic
pointwise convexity} is defined for $\|x\|=1$ and $t>0$ by
$$
\overline{\delta}_E(t;x)= \sup_{\dim(E/H)<\infty} \inf_{h\in H, \|h\|\geq t}
\|x+h\|-1,
$$
and the \textit{modulus of asymptotic uniform convexity} is
defined for $t>0$ by
$$
\overline{\delta}_E(t)= \inf_{\|x\|=1}\overline{\delta}_E(t;x).
$$
The space $E$ is \textit{asymptotically uniformly convex} if
$\overline{\delta}_E(t)>0$, for every $0<t\leq 1$. Finally, $E$ has modulus of
\textit{asymptotic uniform convexity of power} $p$ if there exists $C>0$ such
that $\overline{\delta}_E(t) \geq C t^p$, for all $0<t\leq 1$.

We refer to \cite{Di} for the necessary background on polynomials on Banach
spaces.

\section{Sufficient conditions}

When working with polynomials, the lack of linearity provides, in many cases,
difficulties that can be overcome not without certain detours. The value of $n$
for which  $\mathcal P_w(^nE, F)$ has the chance to be a nontrivial $M$-ideal in
$\mathcal P(^nE, F)$ cannot be chosen arbitrarily. In fact, in the
scalar-valued case \cite{Dim} it was proved  that, whenever $\mathcal P(^mE)
\setminus \mathcal P_w(^mE) \neq \emptyset$ for some $m$, there exists a unique
value $n$, called the \textit{critical degree}, for which $\mathcal P_w(^nE)$
can be a non-trivial $M$-ideal in $\mathcal P(^nE)$. The critical degree of $E$
is defined as
$$
\cd(E):=\min\{k\in \mathbb N\colon \mathcal P_w(^kE) \neq \mathcal P(^kE)\}.
$$
In the vector-valued case, the critical degree is defined by analogy \cite{DiLa} as
$$
\cd(E,F):=\min\{k\in \mathbb N\colon \mathcal P_w(^kE, F) \neq \mathcal P(^kE,
F)\},
$$
and the problem of whether $\mathcal P_w(^nE,F)$ is an $M$-ideal in $\mathcal
P(^nE,F)$ is worth being studied only for polynomials of degree $n$ with
$\cd(E,F)\leq n \leq \cd(E)$. Although we are interested in studying ideal
structures which are more flexible than to be an $M$-ideal, in order to show
positive results we could not get rid of some restrictions on the degree of
homogeneity. We start with a lemma which, under certain conditions on $n$,
gives a version of Johnson's projection \cite [Lemma 1.1]{John} for the polynomial
case.

\begin{lemma}\label{idealF}
Let $E,F$ be Banach spaces and let $n< \cd(E)$. Suppose that $F$ has the metric
compact approximation property. Then $\mathcal P_w(^nE, F)$ is an ideal in
$\mathcal P(^nE, F)$.
\end{lemma}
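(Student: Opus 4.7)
The plan is to imitate Johnson's projection argument for compact operators, translated to the polynomial setting. Let $(K_\alpha)\subset \mathcal K(F)$ be a metric compact approximation of the identity on $F$ and fix a non-trivial ultrafilter $\mathcal U$ on the directed index set. Define $\q\colon \mathcal P(^nE,F)^* \to \mathcal P(^nE,F)^*$ by
$$
\q(\Phi)(P)\,=\,\lim_{\alpha,\mathcal U}\Phi(K_\alpha \circ P).
$$
Since $\|K_\alpha\|\le 1$ gives $\|K_\alpha\circ P\|\le \|P\|$, the functional $\q(\Phi)$ is well-defined, linear, and satisfies $\|\q(\Phi)\|\le \|\Phi\|$. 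The goal is to show that $\q$ is a norm-one projection with kernel $\mathcal P_w(^nE,F)^\perp$.

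The technical core is the claim that $K_\alpha\circ P$ lies in $\mathcal P_w(^nE,F)$ for every $P\in \mathcal P(^nE,F)$. This is exactly where the hypothesis $n<\cd(E)$ enters: it says $\mathcal P(^nE)=\mathcal P_w(^nE)$, so for every $\phi\in F^*$ the scalar polynomial $\phi\circ P$ is weakly continuous on bounded sets. Hence, if a bounded net $x_\beta$ in $E$ converges weakly to $x$, then $\phi(P(x_\beta))\to\phi(P(x))$ for every $\phi\in F^*$, i.e., $P(x_\beta)\rightharpoonup P(x)$ in $F$, while $\{P(x_\beta)\}$ remains bounded. Since $K_\alpha$ is compact, the image $K_\alpha(\{P(x_\beta)\}\cup\{P(x)\})$ is relatively norm-compact in $F$; on such a set the weak and norm topologies coincide, so $K_\alpha P(x_\beta)\to K_\alpha P(x)$ in norm, proving $K_\alpha\circ P\in \mathcal P_w(^nE,F)$.

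Next, for any $P\in \mathcal P_w(^nE,F)$ the set $P(B_E)$ is relatively compact in $F$, and the pointwise convergence $K_\alpha\to \mathrm{Id}_F$ together with $\|K_\alpha\|\le 1$ yields uniform convergence on $\overline{P(B_E)}$; hence $\|K_\alpha\circ P - P\|\to 0$ and therefore $\Phi(K_\alpha\circ P)\to\Phi(P)$. This shows $\q(\Phi)|_{\mathcal P_w(^nE,F)}=\Phi|_{\mathcal P_w(^nE,F)}$. The projection identity follows at once: since each $K_\alpha\circ P\in \mathcal P_w(^nE,F)$,
$$
\q(\q(\Phi))(P)\,=\,\lim_{\alpha,\mathcal U}\q(\Phi)(K_\alpha\circ P)\,=\,\lim_{\alpha,\mathcal U}\Phi(K_\alpha\circ P)\,=\,\q(\Phi)(P).
$$
The identification of the kernel is equally direct: if $\Phi\in \mathcal P_w(^nE,F)^\perp$, then $\Phi(K_\alpha\circ P)=0$ for every $\alpha$, so $\q(\Phi)=0$; conversely, if $\q(\Phi)=0$, then for $P\in \mathcal P_w(^nE,F)$ we have $\Phi(P)=\q(\Phi)(P)=0$. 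A non-zero projection of norm at most one automatically has norm one, so $\mathcal P_w(^nE,F)$ is an ideal in $\mathcal P(^nE,F)$.

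The step I expect to require the most care is the verification that $K_\alpha\circ P\in \mathcal P_w(^nE,F)$. The weak convergence $P(x_\beta)\rightharpoonup P(x)$ is an immediate consequence of $n<\cd(E)$, but to conclude norm convergence of $K_\alpha P(x_\beta)$ one has to pass from weak to norm on a relatively compact set, an argument which is clean for sequences but needs the standard fact about coincidence of weak and norm topologies on norm-compact subsets of a Banach space in order to handle nets. Everything else is a direct transcription of the classical Johnson construction and uses no further structure specific to polynomials beyond the fact that composition with a linear operator on the target preserves $n$-homogeneity.
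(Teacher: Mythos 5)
Your proof is correct and follows essentially the same route as the paper: both construct Johnson's projection $\Phi\mapsto\lim\Phi(K_\alpha\circ P)$, verify that it fixes $\mathcal P_w(^nE,F)^*$ via uniform convergence of $K_\alpha\circ P\to P$ on the compact range, and use $n<\cd(E)$ to place $K_\alpha\circ P$ in $\mathcal P_w(^nE,F)$ for the idempotence and kernel identification. The only cosmetic differences are that you take an ultrafilter limit where the paper passes to a $w^*$-convergent subnet of $(K_\alpha)$ in $\mathcal K(F)^{**}$, and that you prove $K_\alpha\circ P\in\mathcal P_w(^nE,F)$ directly (correctly, via weak-to-norm convergence on the norm-compact set $\overline{K_\alpha(P(B_E))}$) where the paper cites Lemma~1.8 of \cite{DiLa}.
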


\begin{proof}
Let $(L_{\beta}) \subset \mathcal K(F)$ be a metric compact approximation of
the identity. As $(L_{\beta})$ is bounded, there exists a subnet, still denoted
by  $(L_{\beta})$, that converges $w^*$ to some $L_0 \in \mathcal K(F)^{**}$.
Define $\Lambda\colon  \mathcal P(^nE, F)^* \to \mathcal P(^nE, F)^*$ by
\begin{equation}\label{projection}
\Lambda(f)(P)=\lim_{\beta}f(L_{\beta}\circ P).
\end{equation}
Note that $\Lambda$ is well defined. In fact, if $P \in \mathcal P(^nE, F)$
and $\tau_P\colon \mathcal K(F) \to \mathcal P(^nE, F)$ is  the composition
operator,   $\tau_P(K)= K\circ P$, its transpose $\tau_P^*$  satisfies that
$\tau_P^*(f) \in \mathcal K(F)^*$, for any $f \in  \mathcal P(^nE, F)^*$. So
$$
\lim_{\beta}f(L_{\beta}\circ P)=\lim_{\beta}\tau_P^*(f)(L_{\beta})=
L_0(\tau_P^*(f)).
$$
It is clear that $\Lambda$ is linear and $\|\Lambda \|\leq 1$. It is also a
projection: since any $P \in \mathcal P_w(^nE, F)$ is compact and $(L_\beta)$
converges to the identity on compact sets we see that $\lim_{\beta}L_{\beta}
\circ P=P$. Thus,
$$
\Lambda(f)(P)=\lim_{\beta}f(L_{\beta} \circ P)=f(P),
$$
for $P\in \mathcal P_w(^nE, F)$. Now, by \cite[Lemma 1.8]{DiLa}, as $n<
\cd(E)$, the net of polynomials $(L_{\beta}\circ Q)$ belongs to $\mathcal
P_w(^nE, F)$, for every $Q\in \mathcal P(^nE, F)$. Hence,
$$
\Lambda(\Lambda(f))(Q)=\lim_{\beta}\Lambda(f)(L_{\beta}\circ
Q)=\lim_{\beta}f(L_{\beta}\circ Q)=\Lambda(f)(Q),
$$
and $\Lambda^2=\Lambda$. Finally, it is easy to check that $\ker \Lambda=
\mathcal P_w(^nE, F)^{\perp}$. Then, $\Lambda$ is a norm one projection on
$\mathcal P(^nE, F)^*$ with $\ker \Lambda = \mathcal P_w(^nE, F)^{\perp}$.
\end{proof}

\begin{remark}\label{converge}
\rm Every time $\mathcal P_w(^nE, F)$ is an ideal
in $\mathcal P(^nE, F)$ with associated projection $\Lambda$, we have the
decomposition
$$
\mathcal P(^nE, F)^*=\mathcal P_w(^nE, F)^* \oplus \mathcal P_w(^nE, F)^{\perp},
$$
and any $f \in \mathcal P(^nE, F)^*$ has a unique representation such that
\begin{equation}\label{decomposition}
f=g+h,\quad \text{with}\quad g = \Lambda (f) \in \mathcal P_w(^nE, F)^*, \quad
h =f-g \in \mathcal P_w(^nE, F)^{\perp}.
\end{equation}

Now, if $F$ has a metric compact approximation of the identity  $(L_{\beta})
\subset \mathcal K(F)$, we may (and will) suppose that $(L_{\beta})$ is
$w^*$-convergent in $\mathcal K(F)^{**}$ and that the projection $\Lambda$ is
defined as in \eqref{projection}, $\Lambda(f)(P)=\lim_{\beta}f(L_{\beta}\circ
P)$. Then, with $L^{\beta}=\text{Id}_F-L_{\beta}$, we have the following facts
that were already used:
\begin{itemize}
\item $\lim_{\beta}\Lambda(f)(L^{\beta} \circ P)=0,$ for all $f \in \mathcal
P(^nE, F)^*$ and all $P \in \mathcal P(^nE, F)$.
\item $\lim_{\beta}L_{\beta} \circ Q=Q$, for all $Q \in \mathcal P_w(^nE, F)$.
\end{itemize}

Indeed, the first assertion follows by \cite[Lemma~1.8]{DiLa}. For the second
one, note that $L_\beta$ converges uniformly to the identity on compact sets.

Finally, note that since $\|\Lambda \|\le 1$, we automatically have $\|g\|\le
\|f\|$. If in addition $\|L^\beta\|\le 1$, we also obtain  $\|h\|\leq \|f \|$
since $h(P)=(f-\Lambda f)(P)=\lim_{\beta}f(L^{\beta}\circ P)$ for all $P \in
\mathcal P(^nE, F)$.
\end{remark}

As mentioned before, Delpech used the moduli of asymptotic uniform convexity
and smoothness of a Banach space to obtain properties of weak sequential
continuity of polynomials.  Dimant,  Gonzalo and Jaramillo \cite{DiGoJa} showed
the connection between these moduli and compactness or weak sequential
continuity of multilinear mappings. The moduli play their role when dealing with
$\mathcal P_w(^nE, F)$ as an \HB-subspace of $\mathcal P(^nE, F)$. We present a
refinement of \cite[Lemma~10.3]{Delp}, that will be used in the sequel.

\begin{lemma}\label{Delpech}
Let $E$ be an infinite dimensional Banach space and let $(w_{\alpha})\subset E$
be a weakly null bounded net.
\begin{enumerate}[\upshape (a)]
\item If $x \in B_E$, then $\,\overline{\lim}_{\alpha}\|x+w_{\alpha}\| \leq 1 +
\overline{\rho}_E(\overline{\lim}_{\alpha}\|w_{\alpha}\|)$.

\item If $(x_{\alpha})\subset B_E$ is a net contained in a compact set, then
$$
\overline{\lim}_{\alpha}\|x_{\alpha}+w_{\alpha}\|  \leq 1+
\overline{\rho}_E(\overline{\lim}_{\alpha}\|w_{\alpha}\|).
$$
\end{enumerate}
\end{lemma}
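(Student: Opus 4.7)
The plan is to prove part (a) first for unit vectors $x \in S_E$, bootstrap to general $x \in B_E$ via a convex-combination argument together with an auxiliary inequality $t \leq 1 + \overline{\rho}_E(t)$, and finally deduce part (b) by a compactness-and-subnet reduction.

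For the unit-vector case of (a), I would fix $\epsilon > 0$ and set $t = \overline{\lim}_\alpha \|w_\alpha\|$. By the very definition of $\overline{\rho}_E(t) \geq \overline{\rho}_E(t; x)$, there is a finite-codimensional subspace $H \subset E$ with $\sup_{h \in H,\, \|h\| \leq t}\|x + h\| \leq 1 + \overline{\rho}_E(t) + \epsilon$. The key observation is that $E/H$ is finite dimensional, so that the weakly null net $(w_\alpha)$ has its quotient image tending to zero in norm; hence one can select $h_\alpha \in H$ with $\|w_\alpha - h_\alpha\| \to 0$. Because $\|h_\alpha\|$ may slightly exceed $t$, I would truncate via $\tilde h_\alpha = \min(1, t/\|h_\alpha\|)\, h_\alpha \in H$, which satisfies $\|\tilde h_\alpha\| \leq t$ and $\|h_\alpha - \tilde h_\alpha\| \to 0$, and conclude with the triangle estimate $\|x + w_\alpha\| \leq \|x + \tilde h_\alpha\| + \|\tilde h_\alpha - h_\alpha\| + \|h_\alpha - w_\alpha\|$, then letting $\epsilon \to 0$.

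To establish the auxiliary inequality $t \leq 1 + \overline{\rho}_E(t)$ for every $t \geq 0$, I would note that for any unit $y$, any finite-codim $H$, and any $h \in H$ with $\|h\| = t$, the identity $\|y + h\| + \|y - h\| \geq \|(y+h) - (y-h)\| = 2t$ forces $\max(\|y + h\|, \|y - h\|) \geq t$; since both $\pm h$ lie in $H$, this shows $\sup_{h \in H,\, \|h\| \leq t}\|y + h\| \geq t$, hence $\overline{\rho}_E(t) \geq t - 1$. With this in hand, the general case of (a) follows from the convex decomposition $\|x + w_\alpha\| \leq \|x\| \|y + w_\alpha\| + (1 - \|x\|) \|w_\alpha\|$ (with $y = x/\|x\|$ when $x \neq 0$; when $x = 0$ the conclusion is the auxiliary inequality itself), which on passage to the limsup reduces the desired bound to the trivial identity $\|x\|(1 + \overline{\rho}_E(t)) + (1 - \|x\|)t \leq 1 + \overline{\rho}_E(t)$.

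For part (b), I would extract a subnet realizing $L := \overline{\lim}_\alpha \|x_\alpha + w_\alpha\|$, then use compactness of the range of $(x_\alpha)$ to pass to a further subnet $(x_\gamma)$ converging in norm to some $x \in B_E$, and apply $\|x_\gamma + w_\gamma\| \leq \|x_\gamma - x\| + \|x + w_\gamma\|$ together with part (a) applied to $(w_\gamma)$ and monotonicity of $\overline{\rho}_E$. The main technical subtlety will be the truncation step in the unit-vector case: the rescaling absorbs the fact that $\|h_\alpha\|$ may marginally exceed $t$ while keeping $\tilde h_\alpha$ inside $H$, so that the chosen $H$ can still be used with its defining bound on $\|x + h\|$.
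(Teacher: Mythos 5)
Your proof is correct, and its overall skeleton (unit-sphere case, convexity to pass to $B_E$, subnet/compactness for (b)) matches the paper's; the differences are in how two of the three steps are executed. For the case $x\in S_E$ the paper simply invokes Delpech's Lemma~10.3, remarking only that it survives the passage from sequences to nets, whereas you prove it from scratch via the quotient map onto the finite-dimensional $E/H$ and a truncation $\tilde h_\alpha=\min(1,t/\|h_\alpha\|)h_\alpha$; that argument is sound (the truncation error is $\max(0,\|h_\alpha\|-t)$, whose limsup vanishes) and makes the lemma self-contained. For the passage from $S_E$ to $B_E$ you write $x+w_\alpha=\|x\|\bigl(\tfrac{x}{\|x\|}+w_\alpha\bigr)+(1-\|x\|)w_\alpha$, which forces you to establish the auxiliary bound $t\le 1+\overline{\rho}_E(t)$ (your symmetrization argument $\|y+h\|+\|y-h\|\ge 2\|h\|$ does this correctly, using that $H\ne\{0\}$ since $E$ is infinite dimensional); the paper instead writes $x+w_\alpha$ as a convex combination of $\tfrac{x}{\|x\|}+w_\alpha$ and $\tfrac{-x}{\|x\|}+w_\alpha$ with weights $\tfrac{1\pm\|x\|}{2}$, which needs no auxiliary inequality because both extreme points already satisfy the unit-sphere estimate. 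Part (b) is the same argument in both: extract a subnet along which $x_\alpha$ converges in norm, use the triangle inequality and monotonicity of $\overline{\rho}_E$ to compare the limsup along the subnet with the limsup along the full net; you phrase it directly while the paper argues by contradiction. Nothing is missing.
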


\begin{proof} To prove (a) first note that \cite[Lemma~10.3]{Delp} remains
valid if we consider weakly null nets instead of weakly null sequences. That is,
$\overline{\lim}_{\alpha}\|x+w_{\alpha}\| \leq 1 +
\overline{\rho}_E(\overline{\lim}_{\alpha}\|w_{\alpha}\|)$, for any $x\in S_E$.
Now, fix a nonzero $x\in B_E$ and consider each $x+w_{\alpha}$ as a convex
combination of $\displaystyle\frac{x}{\|x\|}+w_{\alpha}$ and
$\displaystyle\frac{-x}{\|x\|}+w_{\alpha}$. Applying the above inequality to
$\disp\frac{\pm x}{\|x\|}$ we get
$$
\overline{\lim}_{\alpha} \left\|\displaystyle\frac{\pm x}{\|x\|}+
w_{\alpha}\right\| \leq
1+\overline{\rho}_E(\overline{\lim}_{\alpha}\|w_{\alpha}\|),
$$
and the statement follows.

Now, suppose that (b) does not hold. Then, we may find subnets
$(x_{\beta}),(w_{\beta})$, and $x_0 \in B_E$, so that
$\lim_{\beta}x_{\beta}=x_0$ and
$\overline{\lim}_{\beta}\|x_{\beta}+w_{\beta}\|>1+\overline{\rho}_E(\overline{
\lim}_{\alpha}\|w_{\alpha}\|)$. As $\overline{\rho}_E$ is increasing,
$\overline{\lim}_{\beta}\|x_{\beta}+w_{\beta}\|>
1+\overline{\rho}_E(\overline{\lim}_{\beta}\|w_{\beta}\|)$. Note that for any
subnet $(\beta_i)$ such that $\lim_{i}\|x_{\beta_i}+w_{\beta_i}\|$ exists, so too does
the limit $\lim_{i}\|x_{0}+w_{\beta_i}\|$, and both coincide. This implies
that $\overline{\lim}_{\beta}\|x_{0}+w_{\beta}\|
=\overline{\lim}_{\beta}\|x_{\beta}+w_{\beta}\|$. It follows that
$\overline{\lim}_{\beta}\|x_{0}+w_{\beta}\|   >1 +
\overline{\rho}_E(\overline{\lim}_{\beta}\|w_{\beta}\|)$, which contradicts (a).
\end{proof}

We are ready to describe $\mathcal P_w(^nE, F)$ as an \HB-subspace of $\mathcal
P(^nE, F)$, under certain conditions on $F$ and $n$.

\begin{theorem}\label{asymptotic}
Let $E$ be a Banach space, and let $n< \cd(E)$. Let $F$  be an infinite dimensional
Banach space
with a shrinking metric compact approximation of the identity
$(L_{\beta}) \subset \mathcal K(F)$ such that $\sup_\beta\|L^{\beta}\| \leq 1$
and suppose that $F$ is asymptotically uniformly smooth. Then, $\mathcal
P_w(^nE, F)$ is an \HB-subspace of $\mathcal P(^nE, F)$.
\end{theorem}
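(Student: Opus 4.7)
The plan is to invoke Lemma~\ref{idealF} -- applicable because the hypotheses give $F$ the metric compact approximation property -- to produce the projection $\Lambda$ on $\mathcal P(^nE,F)^*$ with $\ker\Lambda=\mathcal P_w(^nE,F)^{\perp}$. Writing $g=\Lambda f$ and $h=f-g$, Remark~\ref{converge} delivers $\|g\|\le\|f\|$ automatically and $\|h\|\le\|f\|$ from the hypothesis $\sup_\beta\|L^\beta\|\le 1$. The only \HB-subspace inequality left to prove is the strict one: if $h\neq 0$, then $\|g\|<\|f\|$. This is where asymptotic uniform smoothness of $F$ must enter.

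To attack the strict inequality, I would fix $Q\in\mathcal P(^nE,F)$ with $\|Q\|\le 1$ and $c:=h(Q)\neq 0$, a nearly norming $P\in\mathcal P_w(^nE,F)$ with $\|P\|\le 1$ for $g$, and a small scalar $\lambda$, and then exploit
\[
|f(P+\lambda L^\beta\circ Q)|\le\|f\|\cdot\|P+\lambda L^\beta\circ Q\|.
\]
The left-hand side is straightforward in the limit: since $h$ vanishes on $\mathcal P_w(^nE,F)$ we have $f(P)=g(P)$, and $\lim_\beta f(L^\beta\circ Q)=h(Q)=c$ follows from the definition of $\Lambda$ in \eqref{projection}. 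Rotating the scalar phases of $P$ and $\lambda$ so that $g(P)\ge 0$ and $\lambda c\ge 0$, the left-hand side tends to $|g(P)|+|\lambda|\,|c|$.

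The crux is bounding the right-hand norm by something close to $1$. For each $\beta$, pick $x_\beta\in B_E$ almost attaining $\|P+\lambda L^\beta\circ Q\|$ and pass to a subnet with $x_\beta\to x^{**}$ in $(B_{E^{**}},w^*)$. Because $P\in\mathcal P_w(^nE,F)$, its Aron--Berner extension is $F$-valued and weak-star-to-norm continuous on bounded sets, so $P(x_\beta)\to\overline P(x^{**})$ in norm, placing $(P(x_\beta))$ in a compact subset of $B_F$. The \emph{shrinking} hypothesis gives $(L^\beta)^*y^*\to 0$ in $F^*$ for every $y^*\in F^*$, so since $\|Q(x_\beta)\|\le 1$ one obtains $L^\beta Q(x_\beta)\to 0$ weakly in $F$, with norm at most $1$. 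Lemma~\ref{Delpech}(b) in $F$ then yields
\[
\overline{\lim}_\beta\|P+\lambda L^\beta\circ Q\|\le 1+\overline{\rho}_F(|\lambda|).
\]
Combining and taking the supremum over $P$ gives $\|g\|+|\lambda|\,|c|\le\|f\|\bigl(1+\overline{\rho}_F(|\lambda|)\bigr)$; dividing by $|\lambda|$ and letting $|\lambda|\to 0^+$, the asymptotic uniform smoothness bound $\overline{\rho}_F(t)/t\to 0$ overwhelms the constant $|c|>0$, forcing $\|g\|<\|f\|$. The main obstacle is this norm-control step, where the three hypotheses -- $n<\cd(E)$, the shrinking compact approximation, and asymptotic uniform smoothness of $F$ -- must all be braided together via the Aron--Berner extension and Lemma~\ref{Delpech}(b).
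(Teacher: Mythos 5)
Your proposal is correct and follows essentially the same route as the paper: the Johnson-type projection $\Lambda$ from Lemma~\ref{idealF}, a perturbation $P+\lambda L^{\beta}\circ Q$ of a norming element of $\mathcal P_w(^nE,F)$ for $g$ by a small multiple of $L^{\beta}\circ Q$ with $h(Q)\neq 0$, the norm control $1+\overline{\rho}_F(|\lambda|)$ via Lemma~\ref{Delpech}(b), and the limit $\overline{\rho}_F(t)/t\to 0$. The only cosmetic differences are that the paper secures the compact-range hypothesis of Lemma~\ref{Delpech}(b) by composing the weakly continuous polynomial with a fixed $L_{\beta_0}$ (so its values lie in $L_{\beta_0}(B_F)$) rather than invoking compactness of $\mathcal P_w$-polynomials through the Aron--Berner extension, and that it norms $h$ rather than just using $h(Q)\neq 0$; neither affects the argument.
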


\begin{proof}
Consider the projection $\Lambda$, given in \eqref{projection}, under which
$\mathcal P_w(^nE, F)$ is an ideal in $\mathcal P(^nE, F)$. For any $f \in
\mathcal P(^nE, F)^*$, write $f=g+h$ as in \eqref{decomposition}. Then, as
commented in Remark~\ref{converge},  $\|g\|\leq \|f \|$ and $\|h\|\leq \|f \|$.
In order to finish, we have to prove that $\|g\|<\|f\|$, for $h\neq 0$.

Fix $\eps>0$ and take $P \in \mathcal P(^nE, F)$ and $Q \in \mathcal P_w(^nE,
F)$ such that
$$
\|P\|=\|Q\|=1,\quad h(P) > \|h\|-\eps\quad \text{and}\quad g(Q)> \|g\|-\eps.
$$
Since $\lim_{\beta}L_{\beta} \circ Q=Q$, we can choose $\beta_0$ satisfying
$|g(L_{\beta_0}\circ Q )|> \|g\|-2\eps$. Change, if necessary, $Q$ to $\lambda
Q$ (with $|\lambda|=1$) to obtain $g(L_{\beta_0}\circ Q )> \|g\|-2\eps$. For $t
>0$, consider $L_{\beta_0}\circ Q + t L^{\beta}\circ P$ and take a net
$(x_{\beta}) \subset B_E$ with
$$
\overline{\lim}_{\beta} \|L_{\beta_0}\circ Q + t L^{\beta}\circ P\|_{\mathcal
P(^nE, F)} =\overline{\lim}_{\beta} \|(L_{\beta_0} \circ Q)(x_{\beta}) + t
(L^{\beta}\circ P)(x_{\beta})\|_F.
$$
Now, note that $((L^{\beta}\circ P)(x_{\beta}))$ is weakly null. Indeed,
$(P(x_{\beta}))$ is bounded and for any $y^* \in F^*$,
$\lim_{\beta}(L^{\beta})^* y^*=0$. Then,
$$
\lim_{\beta}\langle (L^{\beta}\circ
P)(x_{\beta}), y^*\rangle = \lim_{\beta}\langle P(x_{\beta}), (L^{\beta})^*
y^*\rangle=0
$$
for any $y^* \in F^*$. On the other hand, the compact set
$L_{\beta_0}(B_F)\subset B_F$ contains the net $((L_{\beta_0}\circ
Q)(x_{\beta}))$. Therefore, Lemma~\ref{Delpech} can be applied to get
$$
\overline{\lim}_{\beta} \|L_{\beta_0}\circ Q + t L^{\beta}\circ P\|_{\mathcal
P(^nE, F)}  \leq 1+ \overline{\rho}_F(\overline{\lim}_{\beta}\|t (L^{\beta}\circ
P)(x_{\beta})\|_F) \leq 1 + \overline{\rho}_F(t).
$$
As observed in Remark~\ref{converge}, $\lim_{\beta }g(L^{\beta}\circ P)=0$.
Then, $|g(L^{\beta}\circ P)|<\eps$, for $\beta\geq \beta_1$. Also, $h(L^{\beta}
\circ P) > \|h\|-\eps$, since $L_{\beta} \circ P$ belongs to $\mathcal P_w(^nE,
F)$ and $h(P)=h(L^{\beta} \circ P)$.

Combining the previous estimates,  we conclude for $t>0$ and $\beta\geq \beta_1$ that
$$
\begin{array}{rl}
(\|g\|-2\eps) -t\eps+ t(\|h\|-\eps) < &  g(L_{\beta_0}\circ Q)-t|g(L^{\beta}\circ
P)|+t h(L^{\beta}\circ P)|\\ \leq & |f(L_{\beta_0} \circ Q+t L^{\beta}\circ P)|.
\end{array}
$$
Then, for $t>0$ and $\eps>0$,
$$
\begin{array}{rl}
\|g\|+ t\|h\|- 2\eps (1+t) \le  & \overline{\lim}_{\beta} |f(L_{\beta_0} \circ
Q+t L^{\beta}\circ P)| \\ \leq & \|f\| \,
\overline{\lim}_{\beta}\|L_{\beta_0}\circ Q+tL^{\beta}\circ P\| \\  \leq & \|f\|
(1+\overline{\rho}_F(t)).
\end{array}
$$
Thus, $\|g\|+ t\|h\| \leq  \|f\| (1+\overline{\rho}_F(t))$. Now, suppose that
$\|g\| = \|f\|$, then $t \|h\|\leq \|f\| \overline{\rho}_F(t)$, for $t>0$. Since
$F$ is asymptotically smooth, $\lim_{t \to
0}\displaystyle\frac{\overline{\rho}_F(t)}{t}=0$ and $h=0$, which completes the
proof.
\end{proof}

Our next result gives another set of sufficient conditions, also related with
the notion of smoothness, under which $\mathcal P_w(^n E, F)$ is an \HB-subspace
in $\mathcal P(^n E, F)$. It is reminiscent of \cite[Theorem 1]{Oja-89}. The
proof is similar to that of the above theorem and we omit it.

\begin{theorem}\label{Oja}
Let $E,F$ be Banach spaces, and let $n < \cd(E)$. Suppose that there exists a metric
compact approximation of the identity  $(L_{\beta})\subset \mathcal K(F)$ such
that $\sup_\beta\|L^{\beta}\| \leq 1$ and
for any $\varepsilon >0$ there exist $\mu>0$ and $\beta_0$ so that
\begin{equation}\label{eq:teo Oja}
\sup_{\|y\|, \|z\|\leq 1} \|L_{\beta} y + \mu L^{\beta}z\| \leq 1 +
\varepsilon\mu \, , \text{ for all } \beta\geq \beta_0.
\end{equation}
Then, $\mathcal P_w(^n E, F)$ is an \HB-subspace of $\mathcal P(^n E, F)$.
\end{theorem}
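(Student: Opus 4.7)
The plan is to mimic the argument for Theorem~\ref{asymptotic}, substituting the modulus-of-smoothness estimate by the inequality \eqref{eq:teo Oja}. Since $(L_\beta)$ is a metric compact approximation of the identity, $F$ has the metric compact approximation property, so Lemma~\ref{idealF} produces the norm-one projection $\Lambda$ of \eqref{projection}, whose kernel is $\mathcal P_w(^nE,F)^\perp$. Decomposing $f=g+h$ with $g=\Lambda(f)$ and $h=f-g$, Remark~\ref{converge} (using $\sup_\beta\|L^\beta\|\le 1$) gives $\|g\|\le\|f\|$ and $\|h\|\le\|f\|$, so the \HB-subspace conclusion reduces to showing $\|g\|<\|f\|$ whenever $h\neq 0$.

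Given $\varepsilon>0$, pick unit-norm $Q\in\mathcal P_w(^nE,F)$ and $P\in\mathcal P(^nE,F)$ with $g(Q)>\|g\|-\varepsilon$ and $h(P)>\|h\|-\varepsilon$, rotating $Q$ by a unimodular scalar if necessary. From Remark~\ref{converge} one has $L_\beta\circ Q\to Q$ in norm and $g(L^\beta\circ P)\to 0$, so for $\beta$ large enough $g(L_\beta\circ Q)>\|g\|-2\varepsilon$ and $|g(L^\beta\circ P)|<\varepsilon$. Both $L_\beta\circ Q$ and $L_\beta\circ P$ lie in $\mathcal P_w(^nE,F)$ (the latter by \cite[Lemma~1.8]{DiLa}, using $n<\cd(E)$), hence $h(L_\beta\circ Q)=0$ and $h(L^\beta\circ P)=h(P)>\|h\|-\varepsilon$. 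Now invoke \eqref{eq:teo Oja} with an \emph{independent} parameter $\varepsilon_0>0$ to produce $\mu=\mu(\varepsilon_0)>0$ and $\beta_0$ for which $\|L_\beta y+\mu L^\beta z\|\le 1+\varepsilon_0\mu$ whenever $\|y\|,\|z\|\le 1$ and $\beta\ge\beta_0$; evaluating pointwise at $y=Q(x)$, $z=P(x)$ for $x\in B_E$ and taking the supremum yields $\|L_\beta\circ Q+\mu L^\beta\circ P\|\le 1+\varepsilon_0\mu$. Combining the lower bound for $f(L_\beta\circ Q+\mu L^\beta\circ P)$ with the upper bound $\|f\|(1+\varepsilon_0\mu)$ produces, for all sufficiently large $\beta$,
$$
\|g\|+\mu\|h\|-2\varepsilon(1+\mu)\le\|f\|(1+\varepsilon_0\mu).
$$
Letting $\varepsilon\to 0$ with $\varepsilon_0$ and $\mu$ held fixed yields $\|g\|+\mu\|h\|\le\|f\|(1+\varepsilon_0\mu)$; under the hypothesis $\|g\|=\|f\|$ this collapses to $\|h\|\le\|f\|\varepsilon_0$, and arbitrariness of $\varepsilon_0$ forces $h=0$.

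The main subtlety I anticipate is the two-parameter bookkeeping: $\varepsilon$ measures how closely the auxiliary polynomials $P,Q$ attain the norms of $h,g$, while $\varepsilon_0$ governs \eqref{eq:teo Oja} and determines $\mu$. Keeping these decoupled is essential so that one may send $\varepsilon\to 0$ at the end with $\mu$ fixed---this mirrors and replaces the use of the limit $\overline{\rho}_F(t)/t\to 0$ in Theorem~\ref{asymptotic}. Everything else is a direct polynomial translation of \cite[Theorem~1]{Oja-89}, with the condition $n<\cd(E)$ ensuring, via \cite[Lemma~1.8]{DiLa}, that $L_\beta\circ P$ lies in $\mathcal P_w(^nE,F)$.
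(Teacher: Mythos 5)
Your proposal is correct and is precisely the adaptation the paper has in mind: the authors omit the proof of Theorem~\ref{Oja}, stating only that it is ``similar to that of'' Theorem~\ref{asymptotic}, and your argument reproduces that template with the single substantive change of replacing the Delpech/modulus-of-smoothness bound by a pointwise application of \eqref{eq:teo Oja} (same index $\beta$ in both terms, which is exactly what that inequality permits). The two-parameter bookkeeping you flag --- $\varepsilon\to 0$ first with $\mu=\mu(\varepsilon_0)$ fixed, then $\varepsilon_0\to 0$ --- correctly plays the role of $\overline{\rho}_F(t)/t\to 0$ in the original proof.
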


Following \cite[Definition~3.6]{Hen} we say that a Schauder basis of a Banach
space is \textit{uniformly smooth} if for every $\varepsilon>0$ there exists
$\delta>0$ such that $\|x + y\| + \|x-y\| < 2 + \varepsilon \|y\|$, whenever $x$
and $y$ have disjoint supports with respect to the basis, $\|x\| = 1$ and
$\|y\|< \delta$. Note that by using convex combinations, the definition can be
restated for $x$ with $\|x\|\le 1$. The next result should be compared with
\cite[Theorem~3.7]{Hen}.

\begin{corollary}\label{basis}
Let $E,F$ be Banach spaces, and let $n< \cd(E)$. Suppose that $F$ has a uniformly
smooth $1$-unconditional basis. Then $\mathcal P_w(^nE, F)$ is an \HB-subspace
of $\mathcal P(^nE, F)$.
\end{corollary}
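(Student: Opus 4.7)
The plan is to deduce this from Theorem~\ref{Oja} by choosing, as the metric compact approximation of the identity on $F$, the natural sequence of basis projections associated with the given $1$-unconditional basis.

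Concretely, let $(e_k)_k$ be the uniformly smooth $1$-unconditional basis of $F$, and set $L_n \colon F \to F$ to be the projection onto the closed linear span of $\{e_1,\dots,e_n\}$. Each $L_n$ is of finite rank, hence compact; by $1$-unconditionality $\|L_n\| = 1$, and also $\|L^n\| = \|\operatorname{Id}_F - L_n\|=1$, since $L^n$ is the projection onto the span of the complementary tail of the basis. Since $L_n x \to x$ for every $x \in F$, the sequence $(L_n)_n$ is a metric compact approximation of the identity satisfying $\sup_n \|L^n\| \le 1$, so the first hypothesis of Theorem~\ref{Oja} is met.

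It remains to check condition \eqref{eq:teo Oja}. Given $\varepsilon>0$, use uniform smoothness of the basis (in the form valid for $\|x\|\le 1$, as noted before the statement) to pick $\delta>0$ such that
$$
\|x+w\| + \|x-w\| < 2 + \varepsilon\|w\|
$$
whenever $x,w$ have disjoint supports with respect to $(e_k)_k$, $\|x\|\le 1$ and $\|w\|<\delta$. Fix any $\mu<\delta$ and any $y,z \in B_F$. Then $x:=L_n y$ and $w:=\mu L^n z$ have disjoint supports (contained in $\{1,\dots,n\}$ and $\{n+1,n+2,\dots\}$ respectively), and $\|x\|\le 1$, $\|w\|\le \mu<\delta$. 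The crucial observation is that, by $1$-unconditionality,
$$
\|L_n y + \mu L^n z\| = \|L_n y - \mu L^n z\|,
$$
since the two expressions differ only by a sign change on the tail block $L^n z$. Applying the uniform smoothness inequality therefore yields
$$
2\,\|L_n y + \mu L^n z\| < 2 + \varepsilon\mu,
$$
so $\|L_n y + \mu L^n z\| < 1 + \varepsilon\mu/2 \le 1 + \varepsilon\mu$ uniformly in $n$ and in $\|y\|,\|z\|\le 1$. This verifies \eqref{eq:teo Oja}, and Theorem~\ref{Oja} then gives that $\mathcal P_w(^nE,F)$ is an \HB-subspace of $\mathcal P(^nE,F)$.

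The only nonroutine point is the step where the inequality from the uniformly smooth basis, which controls a sum plus a difference, is converted into a bound for $\|L_n y + \mu L^n z\|$ alone; this is exactly where $1$-unconditionality is used to equate the two norms on the left-hand side and split the bound by $2$. Everything else (compactness, norm-one projection property, pointwise convergence to the identity) is immediate from the standard properties of a $1$-unconditional Schauder basis.
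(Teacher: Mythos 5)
Your proof is correct and follows essentially the same route as the paper: both take the basis projections as the metric compact approximation of the identity and verify \eqref{eq:teo Oja} via the uniform smoothness of the basis, using $1$-unconditionality to equate $\|L_N y + \mu L^N z\|$ with $\|L_N y - \mu L^N z\|$ and divide the resulting bound by $2$. The paper states this last computation without detail; you have simply spelled it out.
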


\begin{proof}
Let  $\Pi_N$ be the natural projection on $F$ onto the subspace generated by the
first $N$ elements of the $1$-unconditional basis. Then, $(\Pi_N)$ is a metric
compact approximation of the identity and satisfies $\sup_N \|\Pi^N\|\le 1$.
Also, \eqref{eq:teo Oja} in Theorem~\ref{Oja} holds. Indeed, take $\varepsilon
>0$ and consider $\mu=\delta$ as in the definition of uniform smoothness of the
basis. For any $y,z\in B_F$ and $N\in \mathbb N$, being the basis
$1$-unconditional, we have $\|\Pi_N y + \mu \Pi^N z\|\leq 1 + \mu\varepsilon/2$.
Thus, an immediate application of Theorem \ref{Oja} gives the result.
\end{proof}

The above corollary can be applied to show some examples of \HB-subspaces of
polynomials where the spaces $\ell_p$ and the Lorentz sequence spaces $d(w,p)$
appear. Recall that, for $1<p<\infty$, both $\ell_p$ and $d(w,p)$ have uniformly
smooth 1-unconditional bases.  Also, the critical degree of $\ell _p$ is the
integer number satisfying $p\le  \cd(\ell _p) < p+1$ and $\cd(\ell _p, \ell_q)$
is the integer satisfying $\frac{p}{q}\leq \cd(\ell _p, \ell_q) <
\frac{p}{q}+1$. For the case of $\cd(\ell_p, d(w,q))$, a restatement of (I)
and (II) in~\cite[p.~705]{DiLa} reads as
$$
\cd(\ell _p, d(w,q))=\max\{k\in\mathbb N: k<\frac{p}{q}+1 \textrm{ and }
w\not\in\ell_{(\frac{p}{(k-1)q})^*}\}.
$$

\begin{example} \label{HBnoM-ideal} Let $1<p,q<+\infty$.
\item[\rm (a)] Let $E$ be a Banach space, and let $n<\cd(E)$. Then,
\begin{itemize}
\item $\mathcal P_w(^n E, \ell _q)$ is an \HB-subspace of $\mathcal P(^n E, \ell _q)$.
\item $\mathcal P_w(^n E, d(w,q))$ is an \HB-subspace of $\mathcal P(^n E, d(w,q))$.
\end{itemize}

\item[\rm (b)] Let $\cd(\ell _p, \ell_q) < n <\cd(\ell_p)$. Then,
$\mathcal P_w(^n\ell_p, \ell_q)$ is an \HB-subspace but not an $M$-ideal in
$\mathcal P(^n\ell_p, \ell_q)$.

\item[\rm (c)] Let $\cd(\ell _p, d(w,q)) < n <\cd(\ell_p)$. Then, $\mathcal
P_w(^n\ell_p, d(w,q))$ is an \HB-subspace but not an $M$-ideal in $\mathcal
P(^n\ell_p, d(w,q))$. The same result holds for $n=\cd(\ell _p, d(w,q))$ for the
case $\cd(\ell _p, d(w,q))<\frac{p}{q}$.
\end{example}

The statements about not being $M$-ideals in the previous examples are
proved in  \cite[Theorems~3.2 and 3.9]{DiLa}.

Now, we consider conditions satisfied by the domain space $E$ so that we also
have geometric structures in $\mathcal P(^nE;F)$. Similarly to what happens in
Lemma \ref{idealF}, we will describe $\mathcal P _w(^nE, F)$ as an ideal of
$\mathcal P(^nE, F)$ whenever $E^*$ has a metric compact approximation of the
identity with adjoint operators. Here, no restrictions on the degree of the
polynomials are imposed.

\begin{lemma}\label{idealE}
Let $E,F$ be Banach spaces such that $E^*$ has a metric compact approximation of
the identity with adjoint operators. Then,  $\mathcal P_w(^nE, F)$ is an ideal
of $\mathcal P(^nE, F)$ for all $n \in \mathbb N$.
\end{lemma}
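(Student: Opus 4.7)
The plan is to mirror the proof of Lemma~\ref{idealF}, this time using right composition with the compact operators $K_\alpha$ on $E$ instead of left composition with operators on $F$. The natural candidate for the projection is
$$
\Lambda(f)(P) = \lim_\alpha f(P\circ K_\alpha),
$$
which should define a norm-one projection on $\mathcal P(^nE,F)^*$ with kernel $\mathcal P_w(^nE,F)^\perp$; this is the exact polynomial counterpart of the linear equivalence in \cite[Theorem~1.2]{LL}.

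The main difficulty, compared with Lemma~\ref{idealF}, is that $K\mapsto P\circ K$ is nonlinear in $K$, so the limit above cannot be identified with the evaluation of a single weak$^*$-limit of $(K_\alpha)$ in $\mathcal K(E)^{**}$ against a linear functional depending on $f$ and $P$. I would bypass this by passing to a universal subnet of $(K_\alpha)$: since $(f(P\circ K_\alpha))_\alpha$ is a bounded scalar net for every pair $(f,P)$, a universal subnet makes all such limits exist simultaneously. Since $P\mapsto f(P\circ K_\alpha)$ is linear with norm at most $\|f\|\,\|K_\alpha\|^n \le \|f\|$, the resulting $\Lambda(f)$ lies in $\mathcal P(^nE,F)^*$ with $\|\Lambda(f)\|\le \|f\|$, and $\Lambda$ is itself linear of norm at most one.

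The rest parallels Lemma~\ref{idealF} and reduces to two facts: (i) $Q\circ K_\alpha \to Q$ in the norm of $\mathcal P(^nE,F)$ for every $Q\in\mathcal P_w(^nE,F)$, and (ii) $P\circ K_\alpha\in\mathcal P_w(^nE,F)$ for every $P\in\mathcal P(^nE,F)$. For (i), I would invoke the Aron--Herves--Valdivia characterization from \cite{AHV}: polynomials weakly continuous on bounded sets are automatically weakly uniformly continuous on bounded sets, so given $\eps>0$ there exist $y_1^*,\dots,y_k^*\in E^*$ and $\delta>0$ such that $\|Q(x)-Q(y)\|<\eps$ whenever $x,y\in B_E$ and $|y_i^*(x-y)|<\delta$ for every $i$. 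Since $\|K_\alpha\|\le 1$ and $K_\alpha^* y_i^*\to y_i^*$ in norm in $E^*$ by hypothesis, for $\alpha$ large we get
$$
|y_i^*(K_\alpha x - x)| = |(K_\alpha^* y_i^* - y_i^*)(x)| \le \|K_\alpha^* y_i^* - y_i^*\| < \delta
$$
uniformly on $B_E$, hence $\|Q\circ K_\alpha - Q\| < \eps$ eventually. Fact (ii) is standard: compactness of $K_\alpha$ sends a bounded weakly convergent net into a norm-convergent one with the correct weak limit, which is then preserved by the norm-continuous polynomial $P$.

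Combining these, $\Lambda(f)(Q)=f(Q)$ for every $Q\in\mathcal P_w(^nE,F)$ follows at once from (i); using (ii), we obtain
$$
\Lambda(\Lambda(f))(P) = \lim_\alpha \Lambda(f)(P\circ K_\alpha) = \lim_\alpha f(P\circ K_\alpha) = \Lambda(f)(P),
$$
so $\Lambda^2=\Lambda$. Finally $\ker\Lambda=\mathcal P_w(^nE,F)^\perp$ is immediate from the two inclusions, exhibiting the required norm-one projection and identifying $\mathcal P_w(^nE,F)$ as an ideal in $\mathcal P(^nE,F)$.
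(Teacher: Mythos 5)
Your proof is correct and follows the same overall strategy as the paper's: the projection is the same $\Lambda(f)(P)=\lim_\alpha f(P\circ K_\alpha)$, the identity on $\mathcal P_w(^nE,F)$ comes from $\|Q\circ K_\alpha-Q\|\to 0$, and idempotency comes from $P\circ K_\alpha\in\mathcal P_w(^nE,F)$ for arbitrary $P$. Two points of difference are worth recording. First, you are right that the well-definedness argument of Lemma~\ref{idealF} does not transfer verbatim: there $K\mapsto K\circ P$ is linear in $K$, so weak$^*$-convergence of $(L_\beta)$ in $\mathcal K(F)^{**}$ suffices, whereas here $K\mapsto P\circ K$ is $n$-homogeneous in $K$, and the paper's reduction to a weak$^*$-convergent subnet, invoked ``as in Lemma~\ref{idealF}'', does not by itself guarantee that $\lim_\alpha f(P\circ K_\alpha)$ exists when $n>1$. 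Your universal-subnet device repairs this cleanly, and it preserves the hypotheses $\sup_\alpha\|K_\alpha\|\le 1$ and $K_\alpha^*x^*\to x^*$, so nothing downstream is affected. Second, where the paper simply cites \cite[Lemma~2.1]{DiLa} for $\lim_\alpha\|Q-Q\circ K_\alpha\|=0$, you give a direct, self-contained argument via the Aron--Herv\'es--Valdivia identification of $\mathcal P_w(^nE,F)$ with the weakly \emph{uniformly} continuous polynomials on bounded sets; this is correct, as is your verification that right composition with a compact operator always lands in $\mathcal P_w(^nE,F)$, which explains why, in contrast with Lemma~\ref{idealF}, no restriction $n<\cd(E)$ is needed. (The pointer to \cite{LL} in your first paragraph should be to Theorem~1.1 rather than 1.2, but this is purely motivational and carries no weight in the proof.)
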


\begin{proof}
Let $(K_{\alpha}) \subset \mathcal K(E)$ be a net satisfying
$\lim_{\alpha}K_{\alpha}^*x^*=x^*$, for all $x^* \in E^*$ and $\sup_\alpha
\|K_{\alpha}\| \leq 1$. Without loss of generality, we may assume that
$(K_{\alpha})$ is weak$^*$-convergent in $\mathcal K(E)^{**}$.  Therefore, as in
Lemma \ref{idealF}, the mapping
$$
\Lambda\colon \mathcal P(^nE, F)^* \to
\mathcal P(^nE, F)^*
$$
given by
\begin{equation}\label{projection2}
\Lambda(f)(P)=\lim_{\alpha}f(P\circ K_{\alpha}),
\end{equation}
is well defined. It is clear that $\Lambda$ is linear and $\|\Lambda\|\leq 1$.
By \cite[Lemma~ 2.1]{DiLa}, $\lim_{\alpha}\| P-P\circ  K_{\alpha}\|=0$ for every
$P \in \mathcal P_w(^nE, F)$. Then, $\Lambda(f)(P)=f(P)$, for all $P\in \mathcal
P_w(^nE, F)$. Furthermore, $\Lambda$  is a projection: for all $Q\in \mathcal
P(^nE, F)$, $(Q\circ  K_{\alpha})$ belongs to $\mathcal P_w(^nE, F)$. Thus, for
all $Q \in \mathcal P(^nE, F)$,
$$
\Lambda(\Lambda(f))(Q)=\lim_{\alpha}\Lambda(f)(Q \circ
K_{\alpha})=\lim_{\alpha}f(Q\circ  K_{\alpha})=\Lambda(f)(Q),
$$
and
$\Lambda^2=\Lambda$. It is easy to check that $\ker \Lambda= \mathcal P_w(^nE,
F)^{\perp}$ and the result follows.
\end{proof}

The next result gives a sufficient condition to obtain the dual space
$\mathcal P_w(^nE, F)^*$ as a quotient. We denote by $\pi$ the projective tensor
norm. Recall that $\overline P$ denotes the canonical extension of $P$ in
$\mathcal P(^nE, F)$ to $\mathcal P(^nE^{**}, F^{**})$.

\begin{proposition}\label{quotient}
Let $E,F$ be Banach spaces such that  $\mathcal P_w(^nE, F)$ does not contain
$\ell_1$. Then, the application $j\colon \widehat{\otimes}^{n,s}_{\pi_s} E^{**}
\widehat{\otimes}_{\pi} F^* \to \mathcal P_w(^nE, F)^*$, given on any elementary
tensor $u\otimes y^*$ by $j(u\otimes y^*)(P)=y^*(\overline{P}(u))$, is a
quotient mapping.
\end{proposition}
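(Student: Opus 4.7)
The plan is to prove the dual statement: the adjoint $j^*\colon \mathcal P_w(^nE, F)^{**} \to (\widehat{\otimes}^{n,s}_{\pi_s} E^{**}\widehat{\otimes}_{\pi} F^*)^*$ is an isometric embedding, which by standard duality is equivalent to $j$ being a metric quotient mapping. A direct estimate on elementary tensors,
$$|j(u\otimes y^*)(P)| = |y^*(\overline P(u))| \le \|y^*\|\,\|\overline P\|\,\|u\|_{\pi_s} = \|y^*\|\,\|u\|_{\pi_s}\,\|P\|,$$
extends by linearity to give $\|j\|\le 1$. Under the standard identification $(\widehat{\otimes}^{n,s}_{\pi_s} E^{**}\widehat{\otimes}_{\pi} F^*)^* \cong \mathcal P(^nE^{**}, F^{**})$, one computes $j^*(\Phi)(u)(y^*) = \Phi(\varphi_{u,y^*})$, where $\varphi_{u,y^*}\in \mathcal P_w(^nE,F)^*$ is the functional $P\mapsto y^*(\overline P(u))$. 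For $\Phi = P_0\in \mathcal P_w(^nE,F)$ embedded in its bidual, $j^*(P_0) = \overline{P_0}$ is the Aron--Berner extension, which (as noted in Section~2) takes values in $F$ with $\|\overline{P_0}\| = \|P_0\|$; so $j^*$ is already isometric on $\mathcal P_w(^nE,F)$.

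To extend the isometry to all of $\mathcal P_w(^nE,F)^{**}$, I would invoke the no-$\ell_1$ hypothesis via the Odell--Rosenthal--Haydon theorem: each $\Phi\in \mathcal P_w(^nE,F)^{**}$ is the weak-$*$ limit of a bounded sequence $(P_k)\subset \mathcal P_w(^nE,F)$ with $\sup_k\|P_k\|\le \|\Phi\|$ (after passing to a separable subspace if needed, up to an arbitrarily small error). Weak-$*$-to-weak-$*$ continuity of $j^*$, combined with weak-$*$ lower semicontinuity of the norm on $\mathcal P(^nE^{**}, F^{**})$, yields $\|j^*(\Phi)\|\le \liminf_k\|\overline{P_k}\|=\liminf_k\|P_k\|\le \|\Phi\|$. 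For the converse inequality, observe that
$$K := \{\varphi_{u,y^*}\colon u\in B_{\widehat{\otimes}^{n,s}_{\pi_s} E^{**}},\; y^*\in B_{F^*}\}$$
is a James boundary for $\mathcal P_w(^nE,F)$, since $\|P\|=\sup_{x\in B_E,\,y^*\in B_{F^*}}|y^*(P(x))|$, together with a Simons-type theorem valid for Banach spaces not containing $\ell_1$: the norming role of a James boundary transfers to the bidual. This gives $\|\Phi\|=\sup_K|\Phi(\varphi_{u,y^*})|=\|j^*(\Phi)\|$, completing the isometry.

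The main obstacle is precisely this last transfer step: a subset norming $X$ does not, in general, norm $X^{**}$, and the $\ell_1$-free hypothesis is exactly the ingredient (via Haydon--Simons arguments together with the sequential representation of bidual elements) which allows norming sets of $X$ to remain norming on the bidual. Once $j^*$ is verified to be an isometric embedding, $j$ follows as a metric quotient onto $\mathcal P_w(^nE,F)^*$ by standard Banach-space duality.
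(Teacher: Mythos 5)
Your dual reformulation (showing $j^*$ is an isometric embedding of $\mathcal P_w(^nE,F)^{**}$) is a legitimate equivalent of the statement, and the inequality $\|j^*(\Phi)\|\le\|\Phi\|$ is in fact immediate from $\|j\|\le 1$, so the whole Odell--Rosenthal paragraph is superfluous. Worse, it is also unsound as written: Odell--Rosenthal requires separability, and for a non-separable space not containing $\ell_1$ it is simply false that every bidual element is a weak-$*$ limit of a sequence from the space (take $c_0(\Gamma)$ with $\Gamma$ uncountable and the constant-one function in $\ell_\infty(\Gamma)$); there is no obvious ``pass to a separable subspace'' reduction for a statement about norming the bidual. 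The genuine content of the proposition sits entirely in your last step, and there you have two gaps. First, you only verify that $K=\{\varphi_{u,y^*}\}$ is \emph{norming} ($\|P\|=\sup|y^*(P(x))|$), not that it is a James boundary: norm attainment is needed, and it does hold, but the argument uses that for $P\in\mathcal P_w(^nE,F)$ the extension $\overline P$ is $w^*$-to-norm continuous on the $w^*$-compact ball $B_{E^{**}}$, so the supremum is attained at some $z_0\in B_{E^{**}}$, and then a Hahn--Banach functional attains $\|\overline P(z_0)\|$; a merely norming set would not suffice. Second, the ``Simons-type theorem'' you need is really Godefroy's boundary theorem: if $X$ does not contain $\ell_1$ and $B$ is a James boundary, then $B_{X^*}=\overline{\Gamma}(B)$ in norm. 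That statement is valid without separability and, once you have it, you do not need the bidual at all: $B_{\mathcal P_w(^nE,F)^*}=\overline{\Gamma}(K)\subset\overline{j(B_{\widehat{\otimes}^{n,s}_{\pi_s}E^{**}\widehat{\otimes}_{\pi}F^*})}$ already says $j$ is a metric quotient. As it stands, the crucial step is asserted rather than proved.

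For comparison, the paper stays on the predual side and replaces the boundary argument by Haydon's theorem: since $\mathcal P_w(^nE,F)$ does not contain $\ell_1$, the dual ball is the closed convex hull of its extreme points, and \cite[Proposition~1.2]{DiLa} identifies those extreme points as functionals of the form $e_z\otimes y^*$ with $z\in S_{E^{**}}$, $y^*\in S_{F^*}$, which lie in $j$ of the unit ball of the tensor product. That route trades your boundary/attainment argument for a citation describing the extreme points, and avoids any discussion of the bidual. If you want to keep your boundary approach, cite Godefroy explicitly, prove the attainment via the $w^*$-continuity of $\overline P$, and delete the Odell--Rosenthal digression.
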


\begin{proof}
Take $v\in \widehat{\otimes}^{n,s}_{\pi_s} E^{**} \widehat{\otimes}_{\pi} F^*$.
For each representation of $v$ of the form $\sum_i u_i\otimes y^*_i$, with
$u_i\in \widehat{\otimes}^{n,s}_{\pi_s} E^{**}$ and $y^*_i\in  F^*$ for all $i$,
 we have
$$
|j(\sum_i u_i\otimes y^*_i)(P)|= |\sum_i y^*_i(\overline{P}(u_i))| \leq  \|P\|
\sum_i \|u_i\| \|y^*_i \|.
$$
So, $j$ is continuous and $\|j\|=1$. Using Haydon's characterization of spaces
not containing $\ell_1$, we may write the unit ball of $\mathcal P_w(^nE, F)^*$
as the closed convex hull of its extreme points. Now, by
\cite[Proposition~1.2]{DiLa}, with $e_z(P) = \overline P(z)$ for $z\in E^{**}$,
we obtain
$$
\begin{array}{rcl}
B_{\mathcal P_w(^nE, F)^*}&= &\overline{\Gamma(Ext_{\mathcal P_w(^nE, F)^*})}
\subset \overline{\Gamma(e_z \otimes y^*\colon z\in S_{E^{**}}, y^*\in S_{F^*})}
\\
& \subset&\overline{j(B_{\widehat{\otimes}^{n,s}_{\pi_s} E^{**}
\widehat{\otimes}_{\pi} F^*})} \subset B_{\mathcal P_w(^nE, F)^*}.
\end{array}
$$
Then, all the inclusions are (actually) equalities and $j$ is a quotient mapping.
\end{proof}

In the next result we show that the natural hypothesis on $E$ and $F$ guarantee
that $\mathcal P_w(^nE, F)$ does not contain $\ell_1$, and the above proposition
can be applied. We will appeal to the result by Stegall which asserts that if a
Banach space $E$ has a separable subspace whose dual is nonseparable, then $E^*$
lacks the Radon--Nikod\'{y}m property, see for instance \cite[Theorem
VII.2.6]{DU}.

\begin{proposition}\label{nocontainment}
Let $E, F$ be Banach spaces such that $E^{**}$ and $F^*$ have the
Radon--Nikod\'{y}m property. Then, $\mathcal P_w(^nE, F)^*$ has the
Radon--Nikod\'{y}m property and hence $\mathcal P_w(^nE, F)$ does not contain
$\ell_1$ for all $n \in \mathbb N$.
\end{proposition}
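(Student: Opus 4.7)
The second conclusion is a consequence of the first together with Stegall's theorem recalled just above the statement: a copy of $\ell_1$ inside $\mathcal{P}_w(^nE,F)$ would be a separable subspace with nonseparable dual $\ell_\infty$, and then $\mathcal{P}_w(^nE,F)^*$ could not have the Radon--Nikod\'ym property. So all the content lies in proving that $\mathcal{P}_w(^nE,F)^*$ has RNP.

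The plan is to use Stegall's criterion in both directions. The hypothesis $E^{**}$ has RNP is equivalent to $E^*$ being an Asplund space, $F^*$ has RNP is equivalent to $F$ being Asplund, and the goal that $\mathcal{P}_w(^nE,F)^*$ has RNP is equivalent to $\mathcal{P}_w(^nE,F)$ being Asplund, i.e., to every separable subspace of $\mathcal{P}_w(^nE,F)$ having separable dual. So I would pick an arbitrary separable subspace $Z \subset \mathcal{P}_w(^nE,F)$ with a countable dense sequence $\{P_k\}_k$ and aim at showing that $Z^*$ is separable.

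The first reduction is of the range: each $P_k$ is compact (every polynomial in $\mathcal{P}_w$ is), so $P_k(E) = \bigcup_m P_k(m B_E)$ is norm-separable, and the separable subspace $F_0 := \overline{\mathrm{span}}\bigcup_k P_k(E) \subset F$ contains $P(E)$ for every $P \in Z$, so $Z$ embeds isometrically into $\mathcal{P}_w(^nE, F_0)$; moreover $F_0$ is separable Asplund, hence $F_0^*$ is separable. The second reduction, which is the real point, is of the domain: I would apply a Davis--Figiel--Johnson--Pelczynski-type construction uniformly to the countable family $\{P_k\}$ to produce a separable reflexive space $R$, a continuous operator $A \colon E \to R$, and polynomials $Q_k \in \mathcal{P}_w(^nR, F_0)$ with $P_k = Q_k \circ A$ for every $k$, giving an isometric embedding $Z \hookrightarrow \mathcal{P}_w(^nR, F_0)$. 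In this now fully separable setting $\mathcal{P}_w(^nR, F_0)$ does not contain $\ell_1$ (for instance via its identification with an injective tensor product of separable Asplund spaces, under the usual approximation assumption on $R$), so Proposition~\ref{quotient} applies to $(R, F_0)$ and realises $\mathcal{P}_w(^nR, F_0)^*$ as a quotient of the separable Banach space $\widehat{\otimes}^{n,s}_{\pi_s} R \, \widehat{\otimes}_\pi \, F_0^*$, which is therefore separable. Finally $Z^*$, as a further quotient, is separable.

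The main obstacle is the uniform DFJP factorisation of a countable dense family of compact polynomials through a single separable reflexive space, while also arranging that each factor polynomial $Q_k$ remains weakly continuous on bounded sets. Individually the classical DFJP construction produces a reflexive space for each compact polynomial; the technical heart is amalgamating these into a single separable reflexive space valid for all $P_k$ simultaneously, and transporting the $\mathcal{P}_w$ membership from $P_k$ to $Q_k$.
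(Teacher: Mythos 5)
Your handling of the second conclusion (a copy of $\ell_1$ would be a separable subspace with nonseparable dual, contradicting, via Stegall, the Radon--Nikod\'ym property of $\mathcal P_w(^nE,F)^*$) is exactly the paper's. But the main claim --- that $\mathcal P_w(^nE,F)^*$ has the Radon--Nikod\'ym property --- is not actually proved in your proposal. The step you yourself flag as ``the main obstacle'', namely a simultaneous Davis--Figiel--Johnson--Pelczynski-type factorization of a countable dense family of weakly continuous polynomials through one separable reflexive space $R$, with each factor $Q_k$ still in $\mathcal P_w(^nR,F_0)$, is left entirely unproven and is not a routine amalgamation. Even granting it, two further steps are shaky: first, the correspondence $P_k\mapsto Q_k$ need not be isometric (nor obviously an isomorphic embedding of all of $Z$), since the image of $B_E$ is not dense in $B_R$ for such interpolation spaces, and without an isomorphic embedding the separability of $Z^*$ does not follow from that of $\mathcal P_w(^nR,F_0)^*$; second, to invoke Proposition~\ref{quotient} for the pair $(R,F_0)$ you must already know that $\mathcal P_w(^nR,F_0)$ does not contain $\ell_1$, and your route to that passes through an identification with an injective tensor product ``under the usual approximation assumption on $R$'' --- an assumption the constructed space need not satisfy and which you are not free to impose.

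The paper's argument avoids all of this by linearizing in one variable rather than factoring. For $P\in\mathcal P_w(^nE,F)$ with associated symmetric multilinear map $A$, the operator $T_P\in\mathcal L(E,\mathcal P_w(^{n-1}E,F))$ given by $T_P(x)(\tilde x)=A(x,\tilde x,\dots,\tilde x)$ is, by Aron--Herv\'es--Valdivia, a well-defined compact operator with $\|P\|\le\|T_P\|\le e\|P\|$, so $P\mapsto T_P$ is an isomorphism of $\mathcal P_w(^nE,F)$ onto a subspace of $\mathcal K(E,\mathcal P_w(^{n-1}E,F))$. The Ruess--Stegall theorem gives that the dual of a space of compact operators has the Radon--Nikod\'ym property when $E^{**}$ and the dual of the target do; since having a dual with the Radon--Nikod\'ym property (equivalently, being Asplund) passes to closed subspaces and is preserved by isomorphisms, induction on $n$, with base case $\mathcal K(E,F)$, finishes the proof in a few lines. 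You should either supply complete proofs of the factorization and embedding claims or switch to this one-variable linearization argument.
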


\begin{proof}
For any  $P\in \mathcal P_w(^nE, F)$ we consider its associated symmetric
multilinear map $A$ and define $T_P\in \mathcal L(E, \mathcal P_w(^{n-1}E, F))$ as
the operator given by $T_P(x)(\tilde x)=A(x, \tilde x, \dots, \tilde x)$. By
\cite[Theorem~2.9]{AHV}, $T_P$ is a well-defined compact operator and $\|P\|\leq
\|T_P\|\leq e\|P\|$. Then, the mapping $\Phi\colon \mathcal P_w(^n E, F) \to
\mathcal K(E,\mathcal P_w(^{n-1}E, F))$ given by $\Phi(P)=T_P$ is an
isomorphism with its image. As the Radon--Nikod\'{y}m property is preserved by isomorphisms,
induction on $n$ and \cite[Theorem~1.9]{RuSte} yield the result.
\end{proof}

\begin{lemma}\label{Arensregular}
Let $E,F$ be Banach spaces such that  $\mathcal P_w(^nE, F)$ does not contain
$\ell_1$ and such that $n<\cd(E)$. Suppose that  $E^*$ has a
metric compact approximation of the identity with adjoint operators given by
$(K_{\alpha}) \subset \mathcal K(E)$. Then,
$\lim_{\alpha} P\circ K^{\alpha}=0$ in the topology $\sigma(\mathcal P(^nE, F),\mathcal
P_w(^nE, F)^*)$, for any  $P\in \mathcal P(^nE, F)$.
\end{lemma}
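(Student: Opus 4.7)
The plan is to reduce the statement to the elementary case via Proposition~\ref{quotient}, exploiting the scalar equality $\mathcal{P}(^nE)=\mathcal{P}_w(^nE)$ forced by $n<\cd(E)$. Throughout, I view $\mathcal{P}_w(^nE,F)^*$ as a subspace of $\mathcal{P}(^nE,F)^*$ via the canonical ideal embedding $\iota$ attached to the projection $\Lambda$ from Lemma~\ref{idealE}: for $\varphi\in\mathcal{P}_w(^nE,F)^*$, $\iota(\varphi)$ is the unique element of the range of $\Lambda$ that restricts to $\varphi$ on $\mathcal{P}_w(^nE,F)$, and the topology $\sigma(\mathcal{P}(^nE,F),\mathcal{P}_w(^nE,F)^*)$ is understood via this embedding.

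First I handle the elementary case $\varphi=j(z^{\otimes n}\otimes y^*)$ with $z\in E^{**}$ and $y^*\in F^*$. Consider the Hahn--Banach lift $\tilde\varphi\in\mathcal{P}(^nE,F)^*$ defined by $\tilde\varphi(Q):=y^*(\overline{Q}(z))$. The key observation is that for any $Q\in\mathcal{P}(^nE,F)$ the scalar polynomial $y^*\circ Q$ belongs to $\mathcal{P}(^nE)=\mathcal{P}_w(^nE)$ (this is where $n<\cd(E)$ enters); consequently, its Aron--Berner extension $y^*\circ\overline Q=\overline{y^*\circ Q}$ is weak$^*$-to-norm continuous on bounded subsets of $E^{**}$. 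Since $K_\beta^{**}(z)\to z$ in the weak$^*$ topology of $E^{**}$ (because $K_\beta^*x^*\to x^*$ in norm for every $x^*\in E^*$) and the net $(K_\beta^{**}(z))$ is bounded, using the functorial identity $\overline{Q\circ T}=\overline Q\circ T^{**}$ for any bounded linear $T\colon E\to E$ we obtain $\tilde\varphi(Q\circ K_\beta)=y^*(\overline Q(K_\beta^{**}(z)))\to y^*(\overline Q(z))=\tilde\varphi(Q)$. Hence $\Lambda(\tilde\varphi)=\tilde\varphi$, which forces $\iota(\varphi)=\tilde\varphi$. Applying the same weak$^*$-to-norm continuity to the bounded weak$^*$-null net $(K^\alpha)^{**}(z)=z-K_\alpha^{**}(z)$ then yields $\iota(\varphi)(P\circ K^\alpha)=y^*(\overline P((K^\alpha)^{**}(z)))\to y^*(\overline P(0))=0$.

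For general $\varphi\in\mathcal{P}_w(^nE,F)^*$, Proposition~\ref{quotient} provides a representation $\varphi=j\bigl(\sum_i\lambda_i z_i^{\otimes n}\otimes y_i^*\bigr)$ with $\sum_i|\lambda_i|\|z_i\|^n\|y_i^*\|<\infty$. By linearity and the previous identification, $\iota(\varphi)(P\circ K^\alpha)=\sum_i\lambda_i y_i^*(\overline P((K^\alpha)^{**}(z_i)))$. Each summand tends to zero by the elementary case, and the $i$-th term is dominated, using $\sup_\alpha\|K_\alpha\|\le 1$, by $|\lambda_i|\|y_i^*\|\|P\|(2\|z_i\|)^n$, a summable majorant independent of $\alpha$; dominated convergence for sums delivers the result.

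I expect the main obstacle to be the identification of the ideal extension $\iota(\varphi)$ with the explicit Aron--Berner formula $Q\mapsto y^*(\overline Q(z))$ on all of $\mathcal{P}(^nE,F)$. Aron--Berner extensions are not weak$^*$-continuous in general, so this identification is nontrivial; it is precisely the scalarization by $y^*$ together with the hypothesis $n<\cd(E)$ that makes the required weak$^*$-to-norm continuity available. The hypothesis that $\mathcal{P}_w(^nE,F)$ does not contain $\ell_1$ enters only through Proposition~\ref{quotient}, which provides the representation of $\varphi$ used to pass from the elementary case to the general one.
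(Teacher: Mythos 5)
Your proof is correct and follows essentially the same route as the paper: represent functionals on $\mathcal P_w(^nE,F)$ via the quotient map $j$ of Proposition~\ref{quotient}, reduce to diagonal tensors $z\otimes\cdots\otimes z\otimes y^*$, and exploit the weak$^*$ continuity of $\overline P$ (available because $n<\cd(E)$) together with the fact that $(K^{\alpha})^{**}z\to 0$ weak$^*$ and boundedly. The only differences are minor refinements on your part: you verify explicitly that the Aron--Berner pairing $Q\mapsto y^*(\overline Q(z))$ coincides with the ideal extension determined by $\Lambda$ (a point the paper leaves implicit), and you replace the paper's density-of-finite-sums argument by an exact series representation with a dominated-convergence estimate.
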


\begin{proof}
By Proposition \ref{quotient}, the application $j\colon
\widehat{\otimes}^{n,s}_{\pi_s} E^{**} \widehat{\otimes}_{\pi} F^* \to \mathcal
P_w(^nE, F)^*$, defined by $j(u\otimes y^*)(P)=y^*(\overline{P}(u))$ is a
quotient mapping. We will show for $\tilde{u} =\sum_{i=1}^M  w_i\otimes
w_i\otimes\cdots\otimes w_i \otimes y_i^*$ that $\lim_{\alpha}\langle\tilde{u},
P\circ K^{\alpha}\rangle = 0$. So, as for any $h\in \mathcal P_w(^nE, F)^*$
there exists  $u \in \widehat{\otimes}^{n,s}_{\pi_s} E^{**}
\widehat{\otimes}_{\pi} F^* $ so that $j(u)=h$ and $u$ can be approximated by
such $\tilde{u}$'s, the result follows. Take $\tilde{u}$ as above. Then
$\langle\tilde{u}, P\circ K^{\alpha}\rangle =\sum_{i=1}^M \overline{P\circ
K^{\alpha}}( w_i)(y_i^*) =\sum_{i=1}^M \overline{P}((K^{\alpha})^{**}(
w_i))(y_i^*)$.  As $n<\cd(E)$, $\overline P$ is $w^* -
w^*$ continuous and, since  $\lim_\alpha(K^{\alpha})^{**}(w_i)=0$ in the
$w^*$-topology, we obtain $\lim_{\alpha}\langle\tilde{u}, P\circ
K^{\alpha}\rangle =0$, which completes the proof.
\end{proof}

Proposition~1.4 in~\cite{DiGoJa} provides an appropriate equivalence of
asymptotic uniform convexity of power $p$. With a slight modification of its
proof we drop the hypothesis of separability and obtain a refinement, analogous
to condition (c) in Lemma~\ref{Delpech}, as follows.

\begin{lemma}\label{power-p} Let $E$ be an infinite dimensional Banach space
and let $1<p<\infty$. The following statements are equivalent.
\begin{enumerate}[\upshape (a)]
\item $E$ has modulus of asymptotic uniform convexity of power $p$.
\item There exists a constant $C>0$ such that for every $x \in S_E$ and every
bounded weakly null net  $(w_{\alpha})$ in $E$, we have
$$
\overline{\lim}_{\alpha}\|x+w_{\alpha}\|^p \geq 1 +
C\,\overline{\lim}_{\alpha}\|w_{\alpha}\|^p.
$$
\item There exists a constant $C>0$ such that for every  net $(x_{\alpha})$ in
a compact set of $B_E$ and every bounded weakly null net  $(w_{\alpha})$ in
$E$, we have
$$
\overline{\lim}_{\alpha}\|x_{\alpha}+w_{\alpha}\|^p \geq
\overline{\lim}_{\alpha}  \|x_{\alpha}\|^p + C \, \|w_{\alpha}\|^p.
$$
\end{enumerate}
\end{lemma}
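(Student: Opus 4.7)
The plan is to follow the overall architecture of \cite[Proposition~1.4]{DiGoJa}, which gives (a) $\Leftrightarrow$ (b) in the separable sequential setting, and add the refinement (b) $\Rightarrow$ (c).

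For (a) $\Rightarrow$ (b), given $x \in S_E$ and a bounded weakly null net $(w_\alpha)$ with $t = \overline{\lim}_\alpha \|w_\alpha\|$, I would use the definition of $\overline{\delta}_E(t-\varepsilon; x)$ to produce a finite-codimensional subspace $H$, and exploit the weak nullity of $(w_\alpha)$ to show that a canonical projection $P_H w_\alpha$ onto $H$ along the finite-dimensional complement satisfies $\|w_\alpha - P_H w_\alpha\| \to 0$. Applying (a) to $x + P_H w_\alpha$ and raising to the $p$-th power via $(1+s)^p \geq 1 + ps$ for $s \geq 0$ yields (b). For the converse (b) $\Rightarrow$ (a), I would argue by contradiction: a failure of (a) at some $x_0 \in S_E$ and $t_0 \in (0,1]$ with constant $C' = C/p$ provides, indexed by the directed set of finite-codimensional subspaces ordered by reverse inclusion, a bounded net $(h_H)$ with $\|h_H\| \geq t_0$ and $\|x_0 + h_H\|$ too small; this net is automatically weakly null, since for any $x^* \in E^*$ one has $h_H \in H \subset \ker x^*$ eventually, contradicting (b) for $t_0$ sufficiently small.

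The implication (c) $\Rightarrow$ (b) is immediate from taking $x_\alpha \equiv x$. For (b) $\Rightarrow$ (c), given $(x_\alpha)$ in a compact subset of $B_E$ and a bounded weakly null $(w_\alpha)$, I would pass to a subnet along which the right-hand-side quantity attains its limit superior, and then to further subnets so that $x_\beta \to x_0$ (by compactness of the range) and $\|w_\beta\|$ converges. Uniform convergence $x_\beta \to x_0$ forces $\|x_\beta + w_\beta\| - \|x_0 + w_\beta\| \to 0$, as in the proof of Lemma~\ref{Delpech}(b), reducing the left-hand estimate to $\|x_0 + w_\beta\|$. If $x_0 = 0$ the inequality is trivial; otherwise apply (b) to $x_0/\|x_0\| \in S_E$ and the weakly null net $w_\beta/\|x_0\|$, then rescale by $\|x_0\|^p$. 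The resulting lower bound on the subnet limit lifts to the desired lower bound on the lim sup of the original net, since any subnet limit is dominated by the full lim sup.

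The main obstacle is the adaptation to nets in (a) $\Rightarrow$ (b). Without separability, the weak topology on bounded sets is not metrizable, so the classical extraction of a sequence with the required properties is unavailable; instead the projection $P_H$ must be performed uniformly in $\alpha$ and the error controlled via the coordinatewise convergence $x_j^*(w_\alpha) \to 0$ for the finitely many $x_j^*$ defining $H$. This is precisely the point at which the separability hypothesis entered \cite{DiGoJa} and can here be dispensed with.
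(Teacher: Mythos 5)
Your proposal is correct and takes essentially the same route as the paper, which prints no proof of Lemma~\ref{power-p} but describes it exactly as you do: a net-adaptation of \cite[Proposition~1.4]{DiGoJa} for (a)$\Leftrightarrow$(b) (pushing $w_\alpha$ into the finite-codimensional subspace via the coordinate functionals defining it), combined with the compact-set subnet refinement already used for Lemma~\ref{Delpech}(b) to get (b)$\Rightarrow$(c). The only details to polish are quantitative and harmless: in (b)$\Rightarrow$(a) the constant must be chosen small enough that $(1+C't_0^p)^p<1+Ct_0^p$ for \emph{whatever} $t_0\in(0,1]$ the failure of (a) produces (so $C'<C/(p2^{p-1})$ rather than $C/p$, since $t_0$ is given, not chosen), in (a)$\Rightarrow$(b) the case $\overline{\lim}_\alpha\|w_\alpha\|>1$ needs the standard convexity/monotonicity extension of the modulus beyond $t\le 1$, and the ``trivial'' case $x_0=0$ in (b)$\Rightarrow$(c) uses that one may assume $C\le 1$.
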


\begin{proposition}\label{useful}
Let $E,F$ be Banach spaces such that $E^{**}$ and  $F^*$ enjoy the
Radon--Nikod\'{y}m property. Suppose that $E$ has modulus of asymptotic uniform
convexity of power $n=\cd(E,F)$, with $n<\cd(E)$, and suppose that $E$ has a
shrinking metric compact approximation of the identity $(K_{\alpha}) \subset
\mathcal K(E)$ satisfying $\sup_\alpha \|K^{\alpha}\| \leq 1$ and\,
$\overline{\lim}_{\alpha}\|Id_E-2K_{\alpha}\| \leq 1$.

Then, there exists $C>0$ such that $\mathcal P_w(^nE, F)$ is an $M(1,C)$-ideal
of $\mathcal P(^nE, F)$. Furthermore, $\mathcal P_w(^nE, F)$ is an \HB-subspace
of $\mathcal P(^nE, F)$.
\end{proposition}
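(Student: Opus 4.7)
The plan is to combine Lemma~\ref{idealE}, which provides the ideal projection $\Lambda\colon\mathcal P(^nE,F)^*\to\mathcal P(^nE,F)^*$ defined by $\Lambda(f)(P)=\lim_\alpha f(P\circ K_\alpha)$, with a quantitative argument exploiting the modulus of asymptotic uniform convexity of power $n$ on $E$. Writing $f=g+h$ with $g=\Lambda f\in\mathcal P_w(^nE,F)^*$ and $h=f-g\in\mathcal P_w(^nE,F)^\perp$, the bound $\|g\|\leq\|f\|$ is immediate from $\|\Lambda\|\leq 1$, and the \HB-subspace property will follow from the $M(1,C)$-inequality once that and $\|h\|\leq\|f\|$ are in hand.

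A key preliminary identity is that for every $P\in\mathcal P(^nE,F)$ and every $\alpha$,
$$
h(P\circ K^\alpha)=h(P).
$$
This comes from expanding $P(x)=P(K_\alpha x+K^\alpha x)$ via the symmetric multilinear form $A$ of $P$, giving $P=P\circ K_\alpha+P\circ K^\alpha+M_\alpha$, where $M_\alpha$ collects the mixed terms $A(K_\alpha x,\dots,K_\alpha x,K^\alpha x,\dots,K^\alpha x)$ with at least one factor of each type; each such mixed polynomial contains a compact factor and hence belongs to $\mathcal P_w(^nE,F)$, on which $h$ vanishes (as does $h(P\circ K_\alpha)$). This identity also yields $\|h\|\leq\|f\|$: for $\|P\|=1$ one has $|h(P)|=|h(P\circ K^\alpha)|\leq|f(P\circ K^\alpha)|+|g(P\circ K^\alpha)|$; taking $\overline{\lim}_\alpha$ and using Lemma~\ref{Arensregular} to kill the second term and $\|P\circ K^\alpha\|\leq 1$ to bound the first by $\|f\|$, one obtains $|h(P)|\leq\|f\|$.

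For the $M(1,C)$-inequality, fix $\varepsilon>0$, and pick $Q\in S_{\mathcal P_w(^nE,F)}$ with $g(Q)>\|g\|-\varepsilon$ and $P\in S_{\mathcal P(^nE,F)}$ with $h(P)>\|h\|-\varepsilon$ (after a unimodular rotation). For $t>0$ consider the test polynomial $R_\alpha:=Q\circ K_\alpha+t\,P\circ K^\alpha$. Using $g(P\circ K^\alpha)\to 0$, the identity $h(P\circ K^\alpha)=h(P)$, $\|Q\circ K_\alpha-Q\|\to 0$ and $h(Q\circ K_\alpha)=0$ one obtains
$$
\overline{\lim}_\alpha|f(R_\alpha)|\geq\|g\|+t\|h\|-O(\varepsilon).
$$
On the other hand, the triangle inequality and $n$-homogeneity give, for $x\in B_E$, $\|R_\alpha(x)\|_F\leq\|K_\alpha x\|_E^n+t\,\|K^\alpha x\|_E^n$. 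Taking a near-extremal net $(z_\alpha)\subset B_{E^{**}}$ via the Aron--Berner extension (valid since $n<\cd(E)$) and passing to a subnet with $z_\alpha\to z_0$ in $w^*$, the shrinking property forces $K^\alpha z_\alpha\to 0$ weakly in $E$, and the hypothesis $\overline{\lim}_\alpha\|\text{Id}_E-2K_\alpha\|\leq 1$ is invoked to ensure that $(K_\alpha z_\alpha)$ lies asymptotically in a norm-compact subset of $B_E$. Lemma~\ref{power-p}(c) then yields a constant $C>0$ depending only on $E$ with
$$
\overline{\lim}_\alpha\bigl(\|K_\alpha z_\alpha\|^n+C\,\|K^\alpha z_\alpha\|^n\bigr)\leq\overline{\lim}_\alpha\|z_\alpha\|^n\leq 1.
$$
Choosing $t=C$ gives $\overline{\lim}_\alpha\|R_\alpha\|\leq 1$, hence $\|f\|\geq\|g\|+C\|h\|-O(\varepsilon)$, and $\varepsilon\to 0$ yields the $M(1,C)$-inequality. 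Since this forces $\|g\|<\|f\|$ whenever $h\neq 0$, the \HB-subspace property follows.

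The hard part is the penultimate step: showing rigorously that Lemma~\ref{power-p}(c) applies to $(K_\alpha z_\alpha)$ and $(K^\alpha z_\alpha)$. A naive subnet extraction only yields $w^*$-convergence of $(K_\alpha z_\alpha)$ in $E^{**}$, whereas the lemma wants the compact-part net to live in a norm-compact subset of $B_E$. Closing this gap is precisely where the remaining hypotheses are pooled: the Radon--Nikod\'ym property of $E^{**}$ controls the asymptotic geometry of the bounded net $(z_\alpha)$, the shrinking condition pins $(K^\alpha z_\alpha)$ as weakly null, and $\overline{\lim}_\alpha\|\text{Id}_E-2K_\alpha\|\leq 1$ calibrates the two pieces so that the convexity estimate of power $n$ can be activated. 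Once this compactness-in-the-limit assertion is made precise, the rest is bookkeeping.
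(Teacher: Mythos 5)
Your overall strategy is the paper's: the projection of Lemma~\ref{idealE}, the decomposition $f=g+h$, a test polynomial pairing $Q$ composed with the compact part against $P\circ K^{\alpha}$, Lemma~\ref{Arensregular} to kill $g(P\circ K^{\alpha})$, and Lemma~\ref{power-p}(c) to bound the norm of the test polynomial. But the step you yourself flag as ``the hard part'' is a genuine gap, and the hypotheses you propose to pool there cannot close it. By taking $R_\alpha=Q\circ K_\alpha+t\,P\circ K^{\alpha}$ with the \emph{same} moving index on both pieces, you must feed Lemma~\ref{power-p}(c) the net $(K_\alpha x_\alpha)_\alpha$, which has no reason to lie in a norm-compact subset of $B_E$: the operators vary and $\bigcup_\alpha K_\alpha(B_E)$ is not relatively compact in general (think of basis projections $\Pi_N$ and $x_N=e_N$). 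Neither the Radon--Nikod\'{y}m property of $E^{**}$ nor $\overline{\lim}_\alpha\|Id_E-2K_\alpha\|\le 1$ supplies this compactness; in the paper the RNP hypotheses serve only to route Propositions~\ref{nocontainment} and~\ref{quotient} into Lemma~\ref{Arensregular}, and the condition on $\|Id_E-2K_\alpha\|$ is used to estimate $\|K_{\alpha_0}+K^{\alpha}\|$, not to produce compact sets. The repair is to decouple the indices: fix $\alpha_0$ with $g(Q\circ K_{\alpha_0})>\|g\|-2\varepsilon$ and $\|Id_E-2K_{\alpha_0}\|<1+\varepsilon$, and test against $Q\circ K_{\alpha_0}+C\,P\circ K^{\alpha}$ with only $\alpha$ moving. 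Then $(K_{\alpha_0}x_\alpha)_\alpha$ lies in the single compact set $\overline{K_{\alpha_0}(B_E)}$, Lemma~\ref{power-p}(c) applies, and
$$
\overline{\lim}_{\alpha}\bigl(\|K_{\alpha_0}x_\alpha\|^n+C\|K^{\alpha}x_\alpha\|^n\bigr)\le\overline{\lim}_{\alpha}\|K_{\alpha_0}x_\alpha+K^{\alpha}x_\alpha\|^n\le\overline{\lim}_{\alpha}\|K_{\alpha_0}+K^{\alpha}\|^n\le\|Id_E-2K_{\alpha_0}\|^n,
$$
the last estimate being the standard one from \cite[p.~300]{HWW}; this gives $\overline{\lim}_{\alpha}\|Q\circ K_{\alpha_0}+CP\circ K^{\alpha}\|<1+\varepsilon$ and the $M(1,C)$-inequality follows. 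Your same-index choice trades the needed compactness for a telescoping identity that is not worth the price, and the detour through $B_{E^{**}}$ is unnecessary (a near-extremal net in $B_E$ suffices).

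A secondary point: your justification of $h(P\circ K^{\alpha})=h(P)$ asserts that each mixed term $A\bigl((K_\alpha x)^j,(K^{\alpha}x)^{n-j}\bigr)$ with $j\ge 1$ lies in $\mathcal P_w(^nE,F)$ ``because it contains a compact factor.'' As a general principle this is false. What is true is that, after freezing the norm-convergent factors $K_\alpha x_\gamma\to K_\alpha x$, the remaining dependence is an $F$-valued polynomial of degree $n-j<n=\cd(E,F)$, hence weakly continuous on bounded sets precisely because $n$ is the critical degree; without invoking $n=\cd(E,F)$ at this point the claim fails. (The paper instead quotes that $P-P\circ K^{\alpha}$ is weakly continuous at the origin and upgrades this using $n=\cd(E,F)$.) The identity you need is correct, but the critical-degree hypothesis must be cited to obtain it.
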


\begin{proof} We proceed as in Theorem \ref{asymptotic}.
Consider $\Lambda$ as in \eqref{projection2}, under which $\mathcal P_w(^nE,
F)$ is an ideal in $\mathcal P(^nE, F)$ and write $f \in \mathcal P(^nE, F)^*$,
$f=g+h$ as in~\eqref{decomposition}, where  $\|g\|=\|\Lambda(f)\|\leq \|f\|$.

Consider $\varepsilon >0$ and take $P \in \mathcal P(^nE, F)$, $Q  \in \mathcal
P_w(^nE, F)$ with
$$
\|P \|=\|Q \|=1,\quad  h(P )\geq \|h\|-\varepsilon\quad \text{and}\quad g(Q
)\geq \|g\|-\varepsilon.
$$
For any $\alpha$, the polynomial $P -P \circ K^{\alpha}$ is weakly continuous on
bounded sets at 0 (see, for instance, the proof of \cite[Proposition 2.2]{Dim}).
Since $n=\cd(E,F)$, the net $(P -P \circ K^{\alpha})_\alpha$  is in $\mathcal
P_w(^nE, F)$. Then, with $h \in \mathcal P_w(^nE, F)^{\perp}$, $h(P )=h(P \circ
K^{\alpha})\geq \|h\| - \varepsilon$, for all $\alpha$. Also, as $\lim_{\alpha}Q
\circ K_{\alpha}=Q $, there exists $\alpha_0$ so that
$|g(Q \circ K_{\alpha})|> \|g\|-2\varepsilon$ and
$\|Id_E-2K_{\alpha_0}\|<1+\varepsilon$, for all $\alpha \geq \alpha_0$. Changing
$Q \circ K_{\alpha_0}$ to $\lambda Q \circ K_{\alpha_0}$ with $|\lambda|=1$, if
necessary, we may assume that $g(Q \circ K_{\alpha_0})> \|g\|-2\varepsilon$.
Now, with $C>0$ (to be fixed later) we have
$$
\begin{array}{rl}
\|f\|\, \|Q \circ K_{\alpha_0}+C P \circ K^{\alpha}\| & \geq |f( Q \circ
K_{\alpha_0}+C P \circ K^{\alpha})|\\
& \geq g(Q \circ K_{\alpha_0})+C h(P \circ K^{\alpha})-C|g(P \circ K^{\alpha})|\\
& \geq \|g\| +  C \|h\| -(2+C) \varepsilon -C|g(P \circ K^{\alpha})|.
\end{array}
$$
By Lemma~\ref{Arensregular}, we may find $\alpha_1 \geq \alpha_0$ so that $|g(P
\circ K^{\alpha})|<\varepsilon$ for $\alpha \geq \alpha_1$. Thus, we obtain
$$
\|f\|\, \|Q \circ K_{\alpha_0}+CP \circ K^{\alpha}\| \geq \|g\|+C
\|h\|-2\varepsilon(1+C), \quad  \text{  for all  } \alpha \geq \alpha_1.
$$
Now, take $(x_{\alpha}) \subset B_E$ such that $\overline{\lim}_{\alpha}\|Q
\circ K_{\alpha_0}+CP \circ K^{\alpha}\|  =  \overline{\lim}_{\alpha}\|Q
(K_{\alpha_0} x_{\alpha})+CP (K^{\alpha} x_{\alpha})\|$ and note that
$(K_{\alpha_0}x_{\alpha})$ is contained in a compact subset of $B_E$ and that
$(K^{\alpha}{x_{\alpha}})$  is weakly null. Since $E$ has modulus of asymptotic
convexity of power $n$ we apply Lemma~\ref{power-p}, with $C>0$ as in item (c),
and get
$$
\begin{array}{rl}
\overline{\lim}_{\alpha}\|Q \circ K_{\alpha_0}+CP \circ K^{\alpha}\| & \le
\overline{\lim}_{\alpha}\|Q \| \|K_{\alpha_0} x_{\alpha}\|^n +C \|P \|
\|K^{\alpha} x_{\alpha}\|^n\\ & = \overline{\lim}_{\alpha}\|K_{\alpha_0}
x_{\alpha}\|^n +C  \|K^{\alpha} x_{\alpha}\|^n \\
& \le  \overline{\lim}_{\alpha}\| K_{\alpha_0} x_{\alpha}+ K^{\alpha}x_{\alpha}
\|^n \\
&  \leq \overline{\lim}_{\alpha}\|K_{\alpha_0} + K^{\alpha}\|^n\\
& \le \|Id_E-2K_{\alpha_0}\|^n,
\end{array}
$$
where the last inequality, being standard, can be found for instance in
\cite[p. 300]{HWW}. Then, we may find $\alpha_2>\alpha_1$ so that
$$
\|Q  K_{\alpha_0}+CP  K^{\alpha_2}\| < 1+ \varepsilon,
$$
and therefore,
$$
\|g\|+C\|h\|-2\varepsilon(1+C) \leq \|f\| (1+\varepsilon).
$$
Since  $\varepsilon>0$ is arbitrary, $\|g\|+C\|h\| \leq \|f\|$ and  $\mathcal
P_w(^nE, F)$ is an $M(1,C)$-ideal in $\mathcal P(^nE, F)$.

To prove  that $\mathcal P_w(^nE, F)$ is also an \HB-subspace of $\mathcal
P(^nE, F)$ note that for
$h \neq 0$, $\|g\|<\|g\|+C\|h\| \leq \|f\|$. On the other hand,  for $\alpha
>\alpha_1$,
$$
\|f\| \geq |f(P \circ K^{\alpha})|\geq h(P \circ K^{\alpha})-|g(P \circ
K^{\alpha})|\geq \|h\|-2\varepsilon,
$$
implying that $\|f\|\geq \|h\|$, which completes the proof.
\end{proof}

\section{Ideal structures inherited by the range space}

Our purpose in this section is to give sufficient conditions on the spaces $E$
and $F$ under which those geometric properties enjoyed by  $\mathcal P_w(^nE,
F)$ as a subspace of $\mathcal P(^nE, F)$ are inherited by the range space $F$
as a subspace of $F^{**}$. We start with \HB-smoothness presenting an extension
to the polynomial setting of  \cite[Theorem~7]{Oja-Pol}. Our proof also follows
their ideas.

\begin{proposition}\label{surjection}
Let $E,F$ be Banach spaces such that there
exists a surjection  $\rho\colon E \to F$. If $\mathcal P_w(^nE, F)$ is
\HB-smooth in $\mathcal P(^nE, F)$ for some $n \in \mathbb N$, then $F$ is
\HB-smooth in $F^{**}$.
\end{proposition}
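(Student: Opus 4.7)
My plan is to argue by contradiction. Suppose $F$ is not \HB-smooth in $F^{**}$; I will produce a functional $g\in\mathcal P_w(^nE,F)^*$ admitting two distinct norm-preserving extensions in $\mathcal P(^nE,F)^*$, contradicting the hypothesis. Start with $y^*\in S_{F^*}$ having two norm-preserving extensions $\psi_1\neq\psi_2$ in $F^{***}$ and choose $y^{**}\in F^{**}$ with $\psi_1(y^{**})\neq\psi_2(y^{**})$. Since $\rho\colon E\to F$ is surjective, so is its bitranspose $\rho^{**}\colon E^{**}\to F^{**}$, so I may pick $z\in E^{**}$ with $\rho^{**}(z)=y^{**}$.

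Next, define $f_i\in\mathcal P(^nE,F)^*$ by $f_i(P)=\psi_i(\overline P(z))$, where $\overline P$ is the canonical extension of $P$ to $\mathcal P(^nE^{**},F^{**})$. The decisive observation is the fact recalled in Section~2: for $P\in\mathcal P_w(^nE,F)$ the range of $\overline P$ lies in $F$, so $\psi_i(\overline P(z))=y^*(\overline P(z))$ because $\psi_i$ extends $y^*$. Hence $f_1$ and $f_2$ agree on $\mathcal P_w(^nE,F)$; denote this common restriction by $g$. To verify $f_1\neq f_2$, I would pick $\phi\in E^*$ with $\phi^{**}(z)\neq 0$ and consider $P_0(x)=\phi(x)^{n-1}\rho(x)$; the canonical extension, obtained by extending the associated symmetric $n$-linear form one variable at a time by $w^*$-continuity, yields $\overline{P_0}(z)=\phi^{**}(z)^{n-1}\rho^{**}(z)=\phi^{**}(z)^{n-1}y^{**}$, and therefore $f_1(P_0)-f_2(P_0)=\phi^{**}(z)^{n-1}(\psi_1(y^{**})-\psi_2(y^{**}))\neq 0$.

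It remains to check the norm equalities $\|g\|=\|f_1\|=\|f_2\|=\|z\|^n$. The upper bounds $\|f_i\|\leq\|y^*\|\|z\|^n=\|z\|^n$ follow from $\|\overline P\|=\|P\|$ together with $\|\psi_i\|=1$, and $\|g\|\leq\|f_i\|$ holds since $g$ is a restriction. For the matching lower bound on $\|g\|$ I would test against finite-type polynomials $P(x)=\phi(x)^n y$ with $\phi\in S_{E^*}$ and $y\in S_F$, which automatically belong to $\mathcal P_w(^nE,F)$; then $g(P)=\phi^{**}(z)^n y^*(y)$, so choosing $\phi$ with $|\phi^{**}(z)|$ close to $\|z\|$ and $y$ with $|y^*(y)|$ close to $1$ makes $|g(P)|$ arbitrarily close to $\|z\|^n$. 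The main obstacle — really the conceptual heart of the argument — is to spot the collapse mechanism: the extensions $\psi_i$ become indistinguishable on the $\mathcal P_w$-subspace precisely because the canonical extension returns into $F$ there, while on the larger space the polynomial $P_0$ manufactured from the surjection $\rho$ witnesses $\psi_1\neq\psi_2$.
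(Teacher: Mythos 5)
Your proof is correct, but it takes a genuinely different route from the paper's. You argue by contraposition straight from the definition of \HB-smoothness: a $y^*\in S_{F^*}$ with two distinct norm-preserving extensions $\psi_1\neq\psi_2\in F^{***}$ is lifted, through a preimage $z\in E^{**}$ of a separating point $y^{**}=\rho^{**}(z)$, to two functionals $f_i(P)=\psi_i(\overline P(z))$ which agree on $\mathcal P_w(^nE,F)$ (because there $\overline P$ has range in $F$, so both $\psi_i$ collapse to $y^*$), are separated by $P_0(x)=\phi(x)^{n-1}\rho(x)$, and are norm-preserving over their common restriction $g$ since the finite-type polynomials $\phi(x)^ny$ already force $\|g\|\geq\|z\|^n$. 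The individual steps all check out: $\rho^{**}$ is onto, $z\neq 0$ because $y^{**}\notin F$, and your formula $\overline{P_0}(z)=\phi^{**}(z)^{n-1}\rho^{**}(z)$ is exactly the one the paper computes for its polynomial $Q$. The paper instead proves the implication in the forward direction by invoking the Oja--P\~{o}ldvere characterization of \HB-smoothness (conditions (a) and (c) of their Theorem~1) together with the Bishop--Phelps theorem to reduce to norm-attaining $x^{**}$, transferring a ball-type condition from $\mathcal P_w(^nE,F)\subset\mathcal P(^nE,F)$ down to $F\subset F^{**}$ via the polynomials $P_k(x)=x_0^*(x)^ny_k$ and $Q(x)=\rho(x)\,x_0^*(x)^{n-1}\|x_0^{**}\|$. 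Your route is more elementary and self-contained, needing neither the external characterization nor Bishop--Phelps; the paper's route follows the quantitative template that it then reuses for the \HB-subspace and $M(1,C)$ results later in the section. Both arguments ultimately hinge on the same structural fact, namely that the canonical extension of a polynomial in $\mathcal P_w(^nE,F)$ takes its values in $F$.
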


\begin{proof}
Denote by $NA(E^{**})$ the subset of norm-attaining elements in
$E^{**}$ and consider
$$
A:=\{ \rho^{**}(x^{**})\colon x^{**}\neq 0,\quad x^{**}\in NA(E^{**})\}.
$$
As $\rho$ is surjective, $\rho^{**}$ is also surjective and, by the
Bishop-Phelps theorem, $\overline{A}=F^{**}$. By~\cite[Theorem~1~(c)]{Oja-Pol}
it is enough to show that for every  $\rho^{**}(x^{**}) \in A$ and any sequence
$(y_k) \subset F$ with $\|y_1\|< 1$, $\|y_{k+1}-y_k\|< 1$ there are $y \in F$
and $k_0 \in \mathbb N$ so that
$\|\rho^{**}x^{**}- y\pm y_{k_0}\|< k_0$.

Fix $x_0^{**}\ne 0$ in $NA(E^{**})$, take $x_0^* \in S_{E^*}$ such that
$x_0^{**}(x_0^*)=\|x_0^{**}\|$ and define, for $k\in \mathbb N$, the
$n$-homogeneous polynomial $P_k(x)=x_0^{*}(x)^n y_k \in \mathcal P_w(^nE, F)$.
It is clear that
$$
\|P_1\|<1\quad \text{and}\quad \|P_{k+1}-P_k\| \leq \|y_{k+1}-y_k\|< 1,\quad
\text{for all}\ k \in \mathbb N.
$$
Now, define $Q(x)=\rho(x)\,x_0^*(x)^{n-1}\|x_0^{**}\|$ and consider its
Aron--Berner extension $\overline{Q}$ given by $\overline{Q}(x^{**})=
\rho^{**}(x^{**})\,x^{**}(x_0^{*})^{n-1}\|x_0^{**}\|$.

By~\cite[Theorem~1~(a)]{Oja-Pol} there exist $R\in \mathcal P_w(^nE, F)$ and
$k_0 \in \mathbb N$ with $\|Q-R\pm P_{k_0}\|< k_0$. As the Aron--Berner
extension preserves the norm, we obtain
$\|\overline{Q}(x_0^{**})-\overline{R}(x_0^{**})\pm
\overline{P}_{k_0}(x_0^{**})\|< k_0 \|x_0^{**}\|^n$.
The result follows by taking
$y=\displaystyle\frac{\overline{R}(x_0^{**})}{\|x_0^{**}\|^n}$.
\end{proof}

As an immediate consequence we have the following result.

\begin{corollary}\label{F=E}
Let $E$ be a Banach space. If $\mathcal P_w(^nE, E)$ is \HB-smooth in $\mathcal
P(^nE, E)$ for some $n \in \mathbb N$, then $E$ is $ \textit{HB}$-smooth in
$E^{**}$.
\end{corollary}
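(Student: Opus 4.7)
The plan is to apply Proposition~\ref{surjection} directly, using the most obvious surjection available when $F=E$, namely the identity map.

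First, I would observe that $\rho = \mathrm{Id}_E\colon E \to E$ is a (trivially) surjective bounded linear operator from $E$ onto $E$. Then, setting $F = E$ in Proposition~\ref{surjection}, the hypothesis ``$\mathcal{P}_w(^nE,E)$ is \HB-smooth in $\mathcal{P}(^nE,E)$ for some $n \in \mathbb{N}$'' matches the hypothesis of the proposition verbatim.

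Applying Proposition~\ref{surjection} with this choice of $\rho$ and $F=E$ yields immediately that $E$ is \HB-smooth in $E^{**}$, which is the desired conclusion. No further work is required.

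There is essentially no obstacle here: the statement is built as a direct specialization of Proposition~\ref{surjection}, and the only ``insight'' is noticing that $\mathrm{Id}_E$ qualifies as a surjection for the hypothesis to be applicable. One might alternatively phrase this by noting that any nonzero continuous linear surjection $\rho\colon E \to E$ would do equally well, but the identity is the cleanest choice and avoids any need to verify surjectivity.
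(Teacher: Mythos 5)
Your proposal is correct and is exactly the paper's argument: the corollary is stated as an immediate consequence of Proposition~\ref{surjection}, obtained by taking $F=E$ and the identity as the surjection. Nothing further is needed.
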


Note that the above corollary says that $\mathcal P_w(^n\ell_1, \ell_1)$ is
not \HB-smooth in $\mathcal P(^n\ell_1, \ell_1)$ for any $n \in \mathbb N$.

Now we address the notion of \HB-subspace, when the range space $F$ is a
quotient of the space $E$. The following technical result,  inspired by
\cite[Proposition 2.3]{W}, will be useful.

\begin{lemma}\label{HB-smooth}
Let $J$ be an ideal in the Banach space $E$ under the projection $\q$. Suppose
that $J$ is \HB-smooth and $\lambda \in (0,2]$. The following statements are
equivalent.
\begin{enumerate}[\upshape (i)]
\item  $\|Id_{E^*} - \lambda \q\| \leq 1$.
\item  For each $x \in B_E$ there exists a net $(y_{\alpha})\subset J$ such
that $\lim_{\alpha} y_{\alpha}= x$ in the $\sigma(E,J^*)$-topology and
$\overline{\lim}_{\alpha} \|x-\lambda y_{\alpha}\|\leq 1$.
\item  For each $x \in B_E$ and $\varepsilon >0$ there exists a net
$(y_{\alpha})\subset J$ such that $\lim_{\alpha} y_{\alpha}= x$ in the
$\sigma(E,J^*)$-topology and $\overline{\lim}_{\alpha} \|x-\lambda
y_{\alpha}\|\leq 1 + \varepsilon$.
\end{enumerate}
\end{lemma}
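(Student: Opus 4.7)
The strategy is to close the cycle $(\mathrm{ii})\Rightarrow(\mathrm{iii})\Rightarrow(\mathrm{i})\Rightarrow(\mathrm{ii})$. The implication $(\mathrm{ii})\Rightarrow(\mathrm{iii})$ is trivial. For the converse $(\mathrm{iii})\Rightarrow(\mathrm{ii})$, I would build the net by a diagonal construction: for each $n\in\mathbb N$ apply $(\mathrm{iii})$ with $\varepsilon=1/n$, and then index a new net by pairs $(n,U)$ where $U$ ranges over a neighborhood basis of $x$ in $\sigma(E,J^*)$, picking inside each $U$ a witness with $\|x-\lambda y\|\le 1+2/n$.

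For $(\mathrm{iii})\Rightarrow(\mathrm{i})$ the plan is a direct duality computation. Given $g\in B_{E^*}$, $x\in B_E$, and $\varepsilon>0$, let $(y_\alpha)\subset J$ be a net as in $(\mathrm{iii})$. Since $y_\alpha\in J$ and $g-\q g\in J^\perp$, we have $g(y_\alpha)=(\q g)(y_\alpha)$; and since \HB-smoothness makes $\q g$ the unique norm-preserving extension of $\q g|_J\in J^*$, the $\sigma(E,J^*)$-convergence $y_\alpha\to x$ forces $(\q g)(y_\alpha)\to (\q g)(x)$. Hence $g(x-\lambda y_\alpha)\to\bigl((\mathrm{Id}-\lambda\q)g\bigr)(x)$, and taking $\overline{\lim}$ in $|g(x-\lambda y_\alpha)|\le\|g\|\,\|x-\lambda y_\alpha\|$ yields $\bigl|((\mathrm{Id}-\lambda\q)g)(x)\bigr|\le(1+\varepsilon)\|g\|$. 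Letting $\varepsilon\to 0$ and varying $x\in B_E$ gives $(\mathrm{i})$.

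The main obstacle lies in $(\mathrm{i})\Rightarrow(\mathrm{ii})$. Taking adjoints turns $(\mathrm{i})$ into $\|\mathrm{Id}_{E^{**}}-\lambda\q^*\|\le 1$, so for $x\in B_E$ the element $z:=\q^*(x)\in J^{**}$ satisfies $\|x-\lambda z\|_{E^{**}}\le 1$. Under \HB-smoothness $z$ acts on $J^*$ by $f\mapsto\tilde f(x)$, and Goldstine's theorem delivers a net $(y_\alpha)\subset B_J$ with $y_\alpha\to z$ in $\sigma(J^{**},J^*)$, which translates to $y_\alpha\to x$ in $\sigma(E,J^*)$. The subtle point is to simultaneously control $\overline{\lim}\|x-\lambda y_\alpha\|$: Goldstine alone gives weak-$*$ convergence of $x-\lambda y_\alpha$ to $x-\lambda z$ in $E^{**}$, and weak-$*$ lower semicontinuity of the norm runs the wrong way. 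My plan is to refine the Goldstine choice via the Principle of Local Reflexivity applied to the two-dimensional subspace $M:=\mathrm{span}\{x,z\}\subseteq E^{**}$: for every $\eta>0$ and every finite set $\tilde f_1,\dots,\tilde f_n\in\mathrm{Im}(\q)\subset E^*$, PLR produces a $(1+\eta)$-isomorphism $T:M\to E$ with $T(x)=x$ and $\tilde f_i(Tm)=m(\tilde f_i)$, and the refinement available for ideal subspaces further permits $T(z)\in J$. Setting $y:=T(z)\in J$ we obtain $\tilde f_i(y)=z(\tilde f_i)=\tilde f_i(x)$ and, by linearity,
$$\|x-\lambda y\|_E=\|T(x-\lambda z)\|_E\le(1+\eta)\|x-\lambda z\|_{E^{**}}\le 1+\eta.$$
Indexing these witnesses over $(\eta,\{\tilde f_1,\dots,\tilde f_n\})$ and letting $\eta\to 0$ assembles the net required by $(\mathrm{ii})$. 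The technical heart is thus the ideal-valued version of local reflexivity combined with the adjoint inequality coming from $(\mathrm{i})$.
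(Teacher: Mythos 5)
Your proof is correct and follows essentially the same route as the paper: (iii)$\Rightarrow$(i) by the same duality computation, and (i)$\Rightarrow$(ii) via the Principle of Local Reflexivity applied to a finite-dimensional subspace containing $x$ and $\q^* x$ (your explicit appeal to the ideal-refined PLR to guarantee $T(z)\in J$ is in fact a point the paper's own proof glosses over). Note only that your separate diagonal argument for (iii)$\Rightarrow$(ii) is redundant once the cycle closes, and that the appeal to \HB-smoothness in (iii)$\Rightarrow$(i) is unnecessary, since $(\q g)(y_{\alpha})\to(\q g)(x)$ is just the definition of $\sigma(E,J^*)$-convergence with $J^*$ identified with the range of $\q$.
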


\begin{proof} To prove that (i) implies (ii), consider the set of indices
$A:=\{\alpha=(N,M,\eps)\colon N\in \text{FIN}(E^{**}),\, M\in
\text{FIN}(E^{*}),\, \eps >0\}$, where $\text{FIN}$ denotes the set of all
finite dimensional subspaces, with the usual order. By the Principle of Local
Reflexivity, for any $\alpha \in A$ there exists $T_{\alpha}\colon N \to E$
so that
\begin{itemize}
\item $\langle T_{\alpha}x^{**}, x^*\rangle = \langle x^{**}, x^*\rangle$, for
$x^{**} \in N,\, x^* \in M$,
\item $\|T_{\alpha}\|\leq 1 + \eps$,
\item $T_{\alpha}|_{N\cap E} = Id_E$.
\end{itemize}
Fix $x \in B_E$ and consider $y_{\alpha}= T_{\alpha}(\q^*x)\in E$, defined for
$\alpha$ large enough. Fix $y^*\in J^*$ and $\eps >0$, then if $\alpha \ge
(\{\q^* x\}, \{y^* \}, \eps)$, as $J^*$ is the range of $\q$, we have
$$
\langle y^*,y_{\alpha}\rangle= \langle y^*,T_{\alpha}(\q^*x)\rangle = \langle
y^*,\q^* x\rangle = \langle \q y^*, x\rangle = \langle y^*,x\rangle,
$$
and $\lim_{\alpha} y_{\alpha}= x$ in the $\sigma(E,J^*)$-topology. On the other
hand, also for $\alpha$ large enough,
$$
\begin{array}{rl}
\|x-\lambda y_{\alpha}\|&=
\|T_{\alpha}(x-\lambda \q^* x)\| \leq (1+\eps) \|x-\lambda \q^*x\| \leq \\
& \leq(1+\eps)  \|Id_{E^{**}}-\lambda \q^*\| = (1+\eps)  \|Id_{E^*}-\lambda
\q\|\leq 1+\eps.
\end{array}
$$
Then, $\overline{\lim}_{\alpha} \|x-\lambda y_{\alpha}\|\leq 1$ and (ii)
follows.

Clearly, (ii) implies (iii). To prove that (iii) implies (i), fix
$\varepsilon>0$, $x \in B_E$ and  choose  $(y_{\alpha})\subset J$ satisfying
(iii). For each $x^* \in  B_{E^*}$, as $\lim_{\alpha}
x^*(y_{\alpha})=\lim_{\alpha} \q x^*(y_{\alpha})= \q x^*(x)$, we have that
$$
|x^*(x)-\lambda \q x^*(x)| =
\lim_{\alpha} |x^*(x)-\lambda x^*(y_{\alpha})|\leq \|x^*\| \, \,
\overline{\lim}_{\alpha} \| x-\lambda y_{\alpha} \| \leq 1 +\varepsilon.
$$
Since $\varepsilon$ is arbitrary, the implication follows.
\end{proof}

\begin{proposition}\label{quotientHB}
Let $E,F$ be Banach spaces such that there exists a quotient mapping
$\rho\colon E \to F$. If $\mathcal P_w(^nE, F)$ is an \HB-subspace of $\mathcal
P(^nE, F)$, for some $n \in \mathbb N$, then $F$ is an \HB-subspace of $F^{**}$.
\end{proposition}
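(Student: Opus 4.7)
The plan is to verify separately the two defining inequalities of an \HB-subspace for the canonical projection $\q\colon F^{***}\to F^*$ given by $\q(\xi)=\xi|_F$. HB-smoothness of $F$ in $F^{**}$ is immediate from Proposition~\ref{surjection} applied to the surjective $\rho$; combined with $F$ being automatically an ideal in $F^{**}$, the Oja equivalence cited in the introduction upgrades this to strong \HB-smoothness, yielding the strict inequality $\|\q\xi\|<\|\xi\|$ whenever $\xi\ne \q\xi$.

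What remains is the norm bound $\|\mathrm{Id}_{F^{***}}-\q\|\le 1$. Fix $\xi\in F^{***}$ with $\|\xi\|=1$ and $y^{**}\in B_{F^{**}}$; the goal is $|(\xi-\q\xi)(y^{**})|\le 1$. Mimicking the opening of Proposition~\ref{surjection}, I use surjectivity of $\rho^{**}$ together with the Bishop--Phelps density of norm-attaining functionals in $E^{**}$ to pick, for a small $\eta>0$, some $\tilde x^{**}\in NA(E^{**})$ with $\|\tilde x^{**}\|<1+\eta$ and $\|\rho^{**}(\tilde x^{**})-y^{**}\|<\eta$, together with a norming $x_0^*\in S_{E^*}$ so that $\tilde x^{**}(x_0^*)=\|\tilde x^{**}\|$. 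Setting $\hat x^{**}:=\tilde x^{**}/\|\tilde x^{**}\|\in S_{E^{**}}$ (so $\hat x^{**}(x_0^*)=1$), form the lifting polynomial
$$
P(x)=\|\tilde x^{**}\|\,\rho(x)\,x_0^*(x)^{n-1}\in \mathcal P(^n E,F),
$$
which satisfies $\|P\|\le\|\tilde x^{**}\|<1+\eta$ and $\overline P(\hat x^{**})=\rho^{**}(\tilde x^{**})$, the latter being within $\eta$ of $y^{**}$.

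The technical core is the comparison of two functionals in $\mathcal P(^n E,F)^*$ defined by $\Xi(R)=\xi(\overline R(\hat x^{**}))$ and $\Xi'(R)=\q\xi(\overline R(\hat x^{**}))$, each of norm at most $\|\xi\|=1$. They agree on $\mathcal P_w(^n E,F)$ since $\overline Q(\hat x^{**})\in F$ for $Q$ there (recalled in Section~2), making the outer evaluation of $\xi$ and of $\q\xi=\xi|_F$ indistinguishable. The key observation is that $\Xi'$ attains its full norm already on $\mathcal P_w(^n E,F)$: the test polynomials $Q_z(x)=x_0^*(x)^n z$ for $z\in B_F$ lie in $\mathcal P_w(^n E,F)$, have norm one, and satisfy $\overline{Q_z}(\hat x^{**})=z$, so $\sup_{z\in B_F}|\q\xi(z)|=\|\q\xi\|$ is realized on $B_{\mathcal P_w(^nE,F)}$, giving $\|\Xi'|_{\mathcal P_w(^nE,F)}\|=\|\q\xi\|=\|\Xi'\|$. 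By HB-smoothness of $\mathcal P_w(^n E,F)$ in $\mathcal P(^n E,F)$ (implied by the HB-subspace hypothesis), $\Xi'$ is therefore the unique norm-preserving extension of $\Xi|_{\mathcal P_w(^nE,F)}=\Xi'|_{\mathcal P_w(^nE,F)}$, so $\Xi'=\Lambda\Xi$ for $\Lambda$ the associated \HB-subspace projection. The inequality $\|\mathrm{Id}-\Lambda\|\le 1$ then yields
$$
|(\Xi-\Xi')(P)|=|(\mathrm{Id}-\Lambda)\Xi(P)|\le \|\Xi\|\,\|P\|\le 1\cdot(1+\eta),
$$
and since $(\xi-\q\xi)(y^{**})=(\Xi-\Xi')(P)+(\xi-\q\xi)(y^{**}-\overline P(\hat x^{**}))$ with $\|y^{**}-\overline P(\hat x^{**})\|<\eta$ and $\|\xi-\q\xi\|\le 2$, this gives $|(\xi-\q\xi)(y^{**})|\le(1+\eta)+2\eta$; letting $\eta\to 0$ completes the proof.

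The main obstacle, in my view, is pinpointing the right auxiliary extension $\Xi'$ and verifying that its norm on all of $\mathcal P(^nE,F)$ equals that of its restriction to $\mathcal P_w(^nE,F)$; once that norm equality is in place, HB-smoothness of $\mathcal P_w\subset \mathcal P$ identifies $\Xi'$ with the canonical lift $\Lambda\Xi$, and the HB-subspace inequality $\|\mathrm{Id}-\Lambda\|\le 1$ delivers the conclusion in one line.
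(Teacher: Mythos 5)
Your proof is correct, and while its first half coincides with the paper's, the key technical step is handled by a genuinely different argument. Like the paper, you reduce the problem to the single inequality $\|\mathrm{Id}_{F^{***}}-\q\|\le 1$ by invoking Proposition~\ref{surjection} and Oja's upgrade from \HB-smooth ideal to strongly \HB-smooth. For that remaining bound the paper works on the ``primal'' side: it proves Lemma~\ref{HB-smooth}, a Principle-of-Local-Reflexivity characterization of $\|\mathrm{Id}_{E^*}-\lambda\q\|\le 1$ in terms of nets $(y_\alpha)\subset J$ with $y_\alpha\to x$ in $\sigma(E,J^*)$ and $\overline{\lim}_\alpha\|x-\lambda y_\alpha\|\le 1+\varepsilon$; it applies this lemma forwards to $\mathcal P_w(^nE,F)\subset\mathcal P(^nE,F)$ to get a net $(P_\alpha)$ approximating the lifting polynomial $P$, pushes it down to $y_\alpha=\overline P_\alpha(x^{**}/\mu)\in F$, and applies the lemma backwards to $F\subset F^{**}$. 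You instead argue entirely on the dual side: you transplant $\xi$ and $\q\xi$ to functionals $\Xi,\Xi'$ on $\mathcal P(^nE,F)$ via evaluation at $\hat x^{**}$, use the rank-one polynomials $x_0^*(\cdot)^n z$ to check that $\Xi'$ attains its norm already on $\mathcal P_w(^nE,F)$, and then identify $\Xi'=\Lambda\Xi$ by uniqueness of norm-preserving extensions, after which $\|\mathrm{Id}-\Lambda\|\le 1$ transfers in one line. Both proofs rest on the same two ingredients --- the lifting polynomial $\mu\,\rho(x)\,x_0^*(x)^{n-1}$ and the fact that Aron--Berner extensions of polynomials in $\mathcal P_w(^nE,F)$ take values in $F$ --- but your version bypasses the local-reflexivity machinery of Lemma~\ref{HB-smooth} at the cost of the (correctly verified) norm equality $\|\Xi'\|=\|\Xi'|_{\mathcal P_w(^nE,F)}\|$. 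One small point to make explicit: the identification $\Xi'=\Lambda\Xi$ also uses the standard fact that a norm-one ideal projection with kernel $J^\perp$ sends every functional to a norm-preserving extension of its restriction to $J$ (so that $\Lambda\Xi$ qualifies as the competing norm-preserving extension); once stated, the argument is complete.
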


\begin{proof}
First, note that $F$ is an ideal in its bidual and call $\q$ the associated
projection. By Proposition \ref{surjection}, $F$ is \HB-smooth in  $F^{**}$.
Then, by \cite[Theorem]{Oja-88}, $F$ is strongly \HB-smooth in $F^{**}$ and  we
only have to show that $\|f- \q f\|\leq \|f\|$ for all $f \in F^{***}$. In order
to do so,  we will see that condition  (iii) in Lemma~\ref{HB-smooth} is
satisfied for $\lambda=1$.

Take  $y^{**} \in B_{F^{**}}$ and $\varepsilon >0$. Choose $w^* \in S_{F^*}$ so
that $y^{**}(w^*)>0$ and
$\left(\displaystyle\frac{\|y^{**}\|}{y^{**}(w^*)}\right)^n < 1+\varepsilon$.
Now, with $\mu= y^{**}(w^*)$ define $P \in \mathcal P(^nE, F)$ by
$P(x)=\mu\rho(x)(\rho^*w^*)^{n-1}(x),$ for each $x \in E$.
By Lemma~\ref{HB-smooth}, there exists a net $(P_{\alpha}) \subset \mathcal
P_w(^nE, F)$ converging to $P$ in the $\sigma(\mathcal P(^nE, F),\mathcal
P_w(^nE, F)^*)$-topology and such that $\overline{\lim}_{\alpha}\|P-P_{\alpha}\| \leq 1$.
As $\rho$ is a quotient mapping, $\rho^*$ is an isometry and we may find
$x^{**}\in E^{**}$ with $\rho^{**}(x^{**})=y^{**}$ and $\|x^{**}\| =
\|y^{**}\|$. Since the Aron--Berner extension of each $P_{\alpha}$ has range in
$F$, we define $y_{\alpha}=\overline{P}_{\alpha}(x^{**}/\mu)\in F$, for all
$\alpha$.

Note that each $x^{**}\otimes y^*\in E^{**}\otimes F^*$ acts in a natural way
as an element of $\mathcal P(^nE, F)^*$ and, therefore, as an element  of
$\mathcal P_w(^nE, F)^*$. Then, as $\overline{P}(z)=\mu \rho^{**}(z)
z^{n-1}(\rho^*w^*)$, for $z\in E^{**}$,  we have
$\overline{P}(x^{**}/\mu)=y^{**}$ and
$$
y^{**}(y^*)=(\overline{P}(x^{**}/\mu))(y^*)= \lim_{\alpha}
y^*(\overline{P_{\alpha}}(x^{**}/\mu))=\lim_{\alpha} y^*(y_{\alpha}).
$$
Thus, $y_\alpha\to y^{**}$ in the $w^*$-topology. Also,
$$
\begin{array}{rl}
\overline{\lim}_{\alpha} \|y^{**}-y_{\alpha}\|= &
\overline{\lim}_{\alpha} \|\overline P(x^{**}/\mu)-\overline
P_{\alpha}(x^{**}/\mu)\| \\ \le & \overline{\lim}_{\alpha} \|P - P_\alpha\|\|x^{**}/
\mu\|^n \\ \le & \left(\|y^{**}\|/\mu \right)^n< 1+\varepsilon.
\end{array}
$$
Another application of Lemma~\ref{HB-smooth} gives that $\|Id_{F^{***}}-
\q\|\leq 1$ and, therefore, $F$ is an \HB-subspace of $F^{**}$.
\end{proof}

Finally, we  focus on $M(1,C)$-ideal structures. Recall that  if $J$ is an
ideal in a Banach space $E$ satisfying the $M(1,C)$-inequality the following
condition holds: For any $m \in \mathbb N$, $y_1,y_2, \dots , y_m \in B_J$,
$x \in B_E$, and $\varepsilon >0$, there is $z\in J$ such that $\|y_i+Cx-z\|
\leq 1 + \varepsilon$, for $1 \leq i \leq m$, \cite[Lemma~2.2]{CabNieOja}. In
fact, when dealing with $E=J^{**}$, it is true that being an $M(1,C)$-ideal is
equivalent to an appropriate $2$-ball property of type $(1,C)$. Namely, we have
the following equivalence, which can be proved with the arguments appearing in
the proof of \cite[Lemma 2.3]{CabNieOja}.

\begin{lemma}\label{CabNieOja}
Let $E$ be a Banach space, and let$C\in (0,1]$. The following statements are equivalent.
\begin{enumerate}[\upshape (i)]
\item  $E$ is an $M(1,C)$-ideal of $E^{**}$.
\item  For all $x \in S_E$, $x^{**} \in S_{E^{**}}$ and $\eps >0$, there exists
$x_0\in E$ with $\|\pm x + C x^{**} -x_0\| < 1+ \eps $.
\end{enumerate}
\end{lemma}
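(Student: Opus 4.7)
The plan is to verify the two directions separately. For (i) $\Rightarrow$ (ii), I would appeal to the $m$-ball property recalled just above the lemma: if $E$ is an $M(1,C)$-ideal of $E^{**}$, then \cite[Lemma~2.2]{CabNieOja} furnishes, for any $y_1, y_2 \in B_E$, $x^{**} \in B_{E^{**}}$ and $\eps > 0$, some $x_0 \in E$ with $\|y_i + Cx^{**} - x_0\| \leq 1 + \eps$ for $i = 1, 2$. Specializing to $y_1 = x \in S_E$, $y_2 = -x$ and $x^{**} \in S_{E^{**}}$ gives exactly (ii).

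For the reverse direction (ii) $\Rightarrow$ (i), I would adapt the argument in the proof of \cite[Lemma~2.3]{CabNieOja}. Since $E$ is canonically an ideal in $E^{**}$ with associated projection $\q$ on $E^{***}$ of kernel $E^\perp$, only the $M(1,C)$-inequality $\|\q f\| + C\|f - \q f\| \leq \|f\|$ for $f \in E^{***}$ requires verification. Writing $g = \q f$ and $h = f - g \in E^\perp$, and fixing $\eps > 0$, I would choose $x \in S_E$ with $g(\hat x) = f(\hat x) > \|g\| - \eps$ (possible since $\|g\| = \|f|_E\|$) and $x^{**} \in S_{E^{**}}$ with $h(x^{**}) > \|h\| - \eps$. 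Condition (ii) then produces $x_0 \in E$ with $\|\pm x + Cx^{**} - x_0\| < 1 + \eps$; averaging the two inequalities additionally yields $\|Cx^{**} - \hat x_0\| < 1 + \eps$. Evaluating $f$ on $x + Cx^{**} - x_0$ and using $h(\hat x) = h(\hat x_0) = 0$ then leads to
\[
g(\hat x) + C\,h(x^{**}) \;\leq\; \|f\|(1 + \eps) + g(\hat x_0 - Cx^{**}).
\]

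The main obstacle is controlling the residual $g(\hat x_0 - Cx^{**})$: by itself, the crude bound $|g(\hat x_0 - Cx^{**})| \leq \|g\|(1 + \eps)$ delivers only the weaker estimate $C\|h\| \leq \|f\| + \|g\|$. The refinement underlying \cite[Lemma~2.3]{CabNieOja} handles this by modifying $x^{**}$ by elements of $E$ (which preserves $h(x^{**})$, since $h \in E^\perp$) and by exploiting the $w^*$-continuity of $g$ on $E^{**}$ (as $g$ is the canonical image of $f|_E \in E^*$ in $E^{***}$), so as to force the residual into the $\eps$-error. Letting $\eps \to 0$ would then yield $\|g\| + C\|h\| \leq \|f\|$, completing the proof that $E$ is an $M(1,C)$-ideal of $E^{**}$.
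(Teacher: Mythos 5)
The paper offers no proof of this lemma beyond the remark that it ``can be proved with the arguments appearing in the proof of \cite[Lemma 2.3]{CabNieOja}'', so the comparison is really against that reference. Your direction (i) $\Rightarrow$ (ii) is complete and is surely what the authors intend: it is the case $m=2$, $y_1=x$, $y_2=-x\in B_E$, ambient element $x^{**}\in B_{E^{**}}$, of the $m$-ball condition from \cite[Lemma~2.2]{CabNieOja} that the paper recalls in the paragraph immediately preceding the lemma.

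The direction (ii) $\Rightarrow$ (i) is where all the content lies, and your write-up stops exactly at the decisive step. The setup is correct: only the $M(1,C)$-inequality needs checking, the choices of $x$ nearly norming $g=\q f$ and of $x^{**}$ nearly norming $h$ are the right ones, and you correctly isolate the residual $g(x_0-Cx^{**})$ as the obstruction (the crude bound $|g(x_0-Cx^{**})|\le(1+\eps)\|g\|$ indeed yields only $C\|h\|\le\|f\|$, not $\|g\|+C\|h\|\le\|f\|$). But the mechanism you propose for removing it is not an argument. Perturbing $x^{**}$ by elements of $E$ preserves $h(x^{**})$ but need not keep $x^{**}$ in the unit ball; more importantly, $x_0$ is produced by condition (ii) only \emph{after} $x^{**}$ has been fixed, so you have no handle on $g(x_0)$, and the $w^*$-continuity of $g$ by itself gives nothing beyond the crude bound you already discarded. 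Closing this step requires an additional device from \cite{CabNieOja} that your sketch does not supply, so as a self-contained proof the backward implication has a genuine gap. To the extent that your argument is ``by the proof of \cite[Lemma~2.3]{CabNieOja}'', it is no more and no less complete than the paper's own one-line justification --- but then the gap is simply being outsourced rather than filled.
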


We use the above characterization to give an analogous statement  to
Proposition~\ref{quotientHB} in the case of $M(1,C)$-ideals.

\begin{proposition}\label{quotientMC}
Let $E,F$ be Banach spaces such that there exists a quotient mapping
$\rho\colon E \to F$. If $\mathcal P_w(^nE, F)$ is an $M(1,C)$-ideal of
$\mathcal P(^nE, F)$, for some $n \in \mathbb N$ and $C>0$, then $F$ is an
$M(1,C)$-ideal of $F^{**}$.
\end{proposition}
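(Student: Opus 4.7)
The plan is to verify condition (ii) of Lemma \ref{CabNieOja}, which characterizes $F$ being an $M(1,C)$-ideal of $F^{**}$. The argument mirrors the proof of Proposition \ref{quotientHB}, replacing the \HB-subspace reduction by the stronger 2-ball form of the $M(1,C)$-inequality recalled just before Lemma \ref{CabNieOja}.

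Fix $y \in S_F$, $y^{**} \in S_{F^{**}}$, and $\eps > 0$. Following the construction in Proposition \ref{quotientHB}, choose $w^* \in S_{F^*}$ with $\mu := y^{**}(w^*) > 0$ close enough to $1$ so that $1/\mu^n$ is as close to $1$ as desired, and select $x^{**} \in E^{**}$ with $\rho^{**}(x^{**}) = y^{**}$ and $\|x^{**}\| = 1$ (using that $\rho^{**}$ is a quotient mapping since $\rho$ is). Define $P \in \mathcal P(^nE, F)$ by $P(x) = \mu \rho(x) (\rho^*w^*)^{n-1}(x)$, so that $\|P\| \leq \mu \leq 1$ and $\overline P(x^{**}/\mu) = y^{**}$, exactly as computed in Proposition \ref{quotientHB}. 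Define further $Q_{\pm}(x) = \pm y\, (\rho^*w^*(x))^n$; these belong to $\mathcal P_w(^nE, F)$ because the scalar factor is a power of a linear functional (hence weakly continuous on bounded sets), they have norm $1$, and a direct computation gives $\overline{Q_{\pm}}(x^{**}/\mu) = \pm y$.

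Next, apply the 2-ball property of $M(1,C)$-ideals simultaneously to $Q_+, Q_- \in B_{\mathcal P_w(^nE, F)}$ and $P \in B_{\mathcal P(^nE, F)}$: there exists $R \in \mathcal P_w(^nE, F)$ satisfying $\|Q_{\pm} + CP - R\| \leq 1 + \eps$ for both signs. Since $R$ is weakly continuous on bounded sets, its Aron--Berner extension $\overline R$ takes values in $F$, so $y_0 := \overline R(x^{**}/\mu) \in F$. Using linearity of the Aron--Berner extension and $\|x^{**}/\mu\| = 1/\mu$, evaluating at $x^{**}/\mu$ yields
$$
\|\pm y + C y^{**} - y_0\| \;=\; \|\overline{Q_{\pm}}(x^{**}/\mu) + C\overline P(x^{**}/\mu) - \overline R(x^{**}/\mu)\| \;\leq\; \frac{1+\eps}{\mu^n}.
$$
Since $\mu$ can be chosen as close to $1$ as desired and $\eps$ is arbitrary, the right-hand side can be made less than $1+\eps'$ for any preassigned $\eps'>0$, verifying condition (ii) of Lemma \ref{CabNieOja}.

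The main obstacle is conceptual rather than technical: unlike the \HB-subspace case of Proposition \ref{quotientHB}, where a single approximating net suffices, the $M(1,C)$-characterization demands a single element $y_0 \in F$ that controls the distance to both $+y$ and $-y$ simultaneously. This is precisely why the multi-vector formulation of the $M(1,C)$-inequality stated before Lemma \ref{CabNieOja} (applied with $m=2$) is used; constructing $Q_+$ and $Q_-$ as sign-twisted copies of the same underlying scalar polynomial guarantees that a common $R$ produced by the 2-ball property serves both purposes.
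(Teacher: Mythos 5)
Your proposal is correct and follows essentially the same route as the paper: both verify condition (ii) of Lemma~\ref{CabNieOja} by building a rank-one-type polynomial pair $Q,P$ from a norming functional $w^*$ and the quotient map, invoking the $m=2$ case of the multi-ball form of the $M(1,C)$-inequality to get a common $R$, and evaluating Aron--Berner extensions at a norm-preserving preimage of $y^{**}$. The only difference is cosmetic normalization (you absorb $\mu$ into $P$ and evaluate at $x^{**}/\mu$, whereas the paper evaluates at $x^{**}$ and divides by $\mu^{n-1}$ afterwards, absorbing the error via $\|y-\mu y\|\le\delta$).
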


\begin{proof}
Let us prove that condition (ii) in Lemma~\ref{CabNieOja} is satisfied. Fix $y
\in S_F$, $y^{**} \in S_{F^{**}}$ and $\varepsilon >0$. Choose $\delta >0$ such
that $\delta +\displaystyle\frac{1+\delta}{(1-\delta)^{n-1}} < 1+ \varepsilon$
and $y^* \in S_{F^*}$ with $y^{**}(y^*)\geq 1-\delta$. Define, for $x \in E$, $P
\in \mathcal P(^nE, F)$ and $Q \in \mathcal P_w(^nE, F)$ by
$$
P(x)=(\rho^*y^*)^{n-1}(x)\rho(x)\quad \text{and}\quad Q(x)=(\rho^*y^*)^{n}(x) y,
$$
with $\|P\|, \|Q\| \leq \|\rho\|^n = 1$. Due to \cite[Lemma 2.2]{CabNieOja},
there exists $R \in \mathcal P_w(^nE, F)$ so that $\|\pm Q + C P -R \| \leq 1 +
\delta$. As $\rho^{*}$ is an isometry, there is $x^{**} \in S_{X^{**}}$ with
$\rho^{**}(x^{**})=y^{**}$. Extending by Aron--Berner, for $z\in E^{**}$,
$$
\overline{P}(z)=z(\rho^*y^*)^{n-1} \rho^{**}(z) , \quad \text{and}\quad
\overline{Q}(z)=z(\rho^*y^*)^n\, y.
$$
Then, with $\mu= y^{**}(y^*)$, $\overline{P}(x^{**})= \mu^{n-1} y^{**} $ and
$\overline{Q}(x^{**})= \mu^n y$,
$$
\| \pm \mu^n \, y + C \mu^{n-1} \, y^{**} -\overline{R}(x^{**})\|= \|\pm
\overline{Q}(x^{**})+C\overline{P}(x^{**}) -\overline{R}(x^{**})\| \leq
1+\delta.
$$
As $R\in \mathcal P_w(^nE, F)$, the range of $\overline{R}$ is also in $F$ and
we may take $y_0 = \overline{R}(x^{**})/\mu^{n-1}$. Thus,
$$
\|\pm \mu y + C  y^{**}  - y_0\| \leq \frac{1+\delta}{(1-\delta)^{n-1}}.
$$
Finally, $\| \pm y + C y^{**} -y_0\| \leq \| y - \mu y\| + \| \pm \mu  y + C
y^{**} -y_0\| < 1+ \varepsilon$,  and the result follows.
\end{proof}

In the above proposition, the case $C=1$ corresponds to the structure of an
$M$-ideal. The corollary follows directly and seems to be new in this context.

\begin{corollary}\label{quotientMideal}
Let $E,F$ be Banach spaces such that there exists a quotient mapping
$\rho\colon E \to F$. If $\mathcal P_w(^nE, F)$ is an $M$-ideal of $\mathcal
P(^nE, F)$, for some $n \in \mathbb N$, then $F$ is an $M$-ideal of $F^{**}$.
\end{corollary}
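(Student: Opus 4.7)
The plan is to deduce this directly from Proposition \ref{quotientMC} by specializing to the case $C=1$. As the paper observes in the introduction, an $M$-ideal is precisely an $M(1,1)$-ideal: combining the $M(1,1)$-inequality $\|\q f\| + \|f - \q f\| \le \|f\|$ with the triangle inequality $\|f\| \le \|\q f\| + \|f - \q f\|$ forces equality, which is the defining condition of an $M$-ideal. Thus the hypothesis that $\mathcal P_w(^nE,F)$ is an $M$-ideal in $\mathcal P(^nE,F)$ is equivalent to saying that $\mathcal P_w(^nE,F)$ is an $M(1,1)$-ideal in $\mathcal P(^nE,F)$.

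Next, I would apply Proposition \ref{quotientMC} with $C=1$. The only standing hypothesis required is the existence of the quotient mapping $\rho\colon E\to F$, which we have by assumption. The conclusion supplied by that proposition is that $F$ is an $M(1,1)$-ideal in $F^{**}$, and by the same equivalence as in the previous paragraph this means exactly that $F$ is an $M$-ideal in $F^{**}$.

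There is essentially no obstacle: all of the substantive work is already contained in Proposition \ref{quotientMC}, where the $M(1,C)$-inequality in the polynomial space is transferred to $F$ via Lemma \ref{CabNieOja} and the Aron--Berner extensions of the auxiliary polynomials $P(x)=(\rho^*y^*)^{n-1}(x)\rho(x)$ and $Q(x)=(\rho^*y^*)^n(x)y$, using that $\rho^*$ is an isometry because $\rho$ is a quotient mapping. The present corollary is simply the $C=1$ specialization, so the proof reduces to a one-line appeal to the preceding proposition together with the observation about the two notions coinciding when $C=1$.
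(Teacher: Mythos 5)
Your proof is correct and follows exactly the paper's route: the corollary is obtained by specializing Proposition~\ref{quotientMC} to $C=1$ and using that the $M(1,1)$-inequality together with the triangle inequality characterizes $M$-ideals. Your explicit justification of that equivalence is a welcome (if routine) addition to what the paper leaves implicit.
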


As we have already noted in Corollary~\ref{F=E}, it is now immediate to
derive versions of  Proposition~\ref{quotientHB}, Proposition~\ref{quotientMC}
and Corollary~\ref{quotientMideal} for the case $E=F$.
\\
\\
{\bf Acknowledgement.} The work of this paper was initiated while the third author was visiting the
Departamento de Matem\'{a}tica, Universidad de San Andr\'{e}s. She is greatly
indebted to Profs.\ Dimant and Lassalle for the invitation during her sabbatical,
and for the warm hospitality extended by them and their families.
Work partially supported by PAI-UdeSA 2013, CONICET-PIP 0624, ANPCyT PICT 1456,
UBACyT X038 and MINECO grant MTM 2012-3431.

\bibliographystyle{amsplain}

\end{document}